\newcommand{\R}{\mathbb{R}}
\newcommand{\N}{\mathbb{N}}
\theoremstyle{plain}
\newtheorem{theorem}{Theorem}[section]
\newtheorem*{theorem*}{Theorem}
\newtheorem{lemma}[theorem]{Lemma}
\newtheorem*{lemma*}{Lemma}
\newtheorem{prop}[theorem]{Proposition}
\newtheorem*{prop*}{Proposition}
\newtheorem*{corollary*}{Corollary}
\theoremstyle{definition}
\newtheorem*{example*}{e.g.}
\newtheorem{remark}[theorem]{Remark}
\newtheorem*{remark*}{Remark}
\newtheorem{assumption}[theorem]{Assumption}
\newtheorem*{assumption*}{Assumption}
\numberwithin{equation}{section}
\newtheorem{definition}[theorem]{Definition}
\newtheorem*{definition*}{Definition}
\title{Time global existence of generalized BV flow\\
via the Allen--Cahn equation}
\author{Kiichi Tashiro}
\keywords{Allen--Cahn equation, geometric measure theory, mean curvature flow}
\subjclass{53E10 (primary), 28A75, 35K75}
\address{(K.Tashiro) Department of Mathematics, Tokyo Institute of Technology, 2-12-1 Ookayama, Meguroku, Tokyo 152-8551, Japan}
\email{tashiro.k.ai@m.titech.ac.jp}
\begin{document}
	
	\maketitle
	\markboth{Kiichi Tashiro}{Time global existence of generalized BV flow via the Allen--Cahn equation}
	
	\begin{abstract}
		We show that a mean curvature flow obtained as the limit of the Allen--Cahn equation is not only a Brakke flow but also a generalized BV flow proposed by Stuvard and Tonegawa. 
	\end{abstract}
	
	\section{Introduction}
	\label{Intro}
        One of the most important geometric flows, the mean curvature flow (henceforth referred to as MCF), has been studied in the mathematical literature since the 1970's. 
        The unknown of MCF is a one-parameter family $ \{ M_t \}_{ t \geq 0 } $ of surfaces in the Euclidean space (or more generally some Riemannian manifold) such that the normal velocity vector $ v $ of $ M_t $ equals its mean curvature vector $ h $ at each point for every time, i.e., $ v = h $ on $ M_t $. Given a compact smooth surface, a unique smooth solution exists until singularities such as shrinkage and neck pinching occur. To consider the solutions that allow singularities, various frameworks of weak solutions of the MCF have been proposed: we mention the Brakke flow \cite{brakke1978motion}, the level set solution \cite{evans1991motion,chen1991uniqueness}, the BV flow \cite{luckhaus1995implicit}, the $ L^2 $ flow \cite{roger2008allen}, the De Giorgi type flow \cite{hensel2021new} and the generalized BV flow \cite{StuvardTonegawa+2022}.
	\vskip.5\baselineskip
        The Allen--Cahn equation 
        is the following simple reaction-diffusion
        semilinear PDE that can produce a MCF in the singular perturbation limit: 
        \begin{equation*}
        \tag{AC}\label{AC0}
        \begin{cases}
            \ \partial_t \varphi^{ \varepsilon } = \Delta \varphi^{ \varepsilon } - \frac{ W' ( \varphi^{ \varepsilon } ) } { \varepsilon^2 } & \text{on } \R^n \times ( 0 , \infty ) ,\\
            \ \varphi^{ \varepsilon } ( \cdot , 0 ) = \varphi^{ \varepsilon }_0 ( \cdot ) & \text{on } \R^n .
        \end{cases}
    \end{equation*}
        Here $ \varepsilon > 0 $ is a small parameter and $ W $ is a double-well potential with local minima at $\pm 1$, for example, $ W( s ) := ( 1 - s^2 )^2 / 2 $. We are interested in the characterization of the limit problem as $ \varepsilon \rightarrow 0 $, where one expects that $ \varphi^{ \varepsilon } \approx \pm 1$ for a bulk region, and the transition layer $ \{ \varphi^{ \varepsilon } \approx 0 \} $ moves by the mean curvature. It is known in the most general setting of geometric measure theory that the limit energy concentration measure $\mu_t$ is a Brakke flow \cite{ilmanen1993convergence,tonegawa2003integrality}. The purpose of this paper is to show that the MCF arising from the Allen--Cahn equation is a generalized BV flow in addition to being a Brakke flow. It is natural to consider the relationship between the phase function $ \varphi $ and the Brakke flow $ \mu_t $, and we prove that $ \varphi $ satisfies the BV-type formula. Namely, the main claim is the following, roughly speaking:
	\begin{theorem*}
		Let $ \{ \mu_t \}_{ t \geq 0 } $ be the Brakke flow and $ \varphi ( x , t ) = \chi_{ E_t } ( x ) $ be the phase function obtained as a limit of (AC). Then, for all test function $ \phi \in C^1_c ( \R^n ) $ and $ 0 \leq t_1 < t_2 < \infty $, we have
		\begin{equation}
			\label{BV}
				\left.\int_{ \R^n } \phi ( x )\,\varphi ( x , t )\ d x\,\right|_{ t = t_1 }^{ t_2 }
				= \int_{ t_1 }^{ t_2 } \int_{ \R^n } \phi ( x )\,( h ( x , t )  \cdot \nu ( x , t ) )\ d | \nabla \varphi ( \cdot , t ) | ( x ) d t,
		\end{equation}
		where $ h ( \cdot , t ) $ is the generalized mean curvature vector of $ \mu_t $, $ | \nabla \varphi ( \cdot , t ) | $ is the perimeter measure of the phase function $ \varphi ( \cdot , t ) $ and $ \nu ( \cdot , t ) $ is the unit outer normal vector of $ | \nabla \varphi ( \cdot , t ) | $.
	\end{theorem*}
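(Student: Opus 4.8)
The plan is to run the diffuse-interface approximation: I rewrite both sides of \eqref{BV} in terms of the rescaled phase $\Phi(\varphi^\varepsilon)$, where $\Phi(s):=\sigma^{-1}\int_{-1}^{s}\sqrt{2W(\tau)}\,d\tau$ and $\sigma:=\int_{-1}^{1}\sqrt{2W(\tau)}\,d\tau$ (so $\Phi(-1)=0$, $\Phi(1)=1$, $\Phi'=\sqrt{2W}/\sigma$), and pass to the limit $\varepsilon\to0$; the only delicate point is the identification of the limiting ``velocity $\times$ perimeter'' measure. From the convergence theory of (AC) recalled above I use, along a subsequence: $\mu^\varepsilon_t:=\sigma^{-1}\bigl(\tfrac\varepsilon2|\nabla\varphi^\varepsilon|^2+\tfrac1\varepsilon W(\varphi^\varepsilon)\bigr)\mathcal L^n\rightharpoonup\mu_t$ for a.e.\ $t$, with the associated varifolds integral and of weight $\mu_t$; $\Phi(\varphi^\varepsilon(\cdot,t))\to\varphi(\cdot,t)=\chi_{E_t}$ in $L^1_{\mathrm{loc}}$ for a.e.\ $t$; $|\nabla\chi_{E_t}|\le\mu_t$ and $h(\cdot,t)\in L^2(\mu_t)$ with $\iint|h|^2\,d\mu_t\,dt<\infty$; the energy-dissipation bound $\iint\varepsilon|\partial_t\varphi^\varepsilon|^2\,dx\,dt\le c_0$; and the space--time vanishing of the discrepancy $\iint|\xi^\varepsilon|\to0$ on compacta, where $\xi^\varepsilon:=\tfrac\varepsilon2|\nabla\varphi^\varepsilon|^2-\tfrac1\varepsilon W(\varphi^\varepsilon)$. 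I would first prove \eqref{BV} for $t_1<t_2$ in the full-measure set of times at which all of the above holds, and afterwards remove this restriction using that $t\mapsto\int\phi\,\chi_{E_t}\,dx$ is continuous (the $E_t$'s vary continuously in $L^1$) while the right-hand side of \eqref{BV} is absolutely continuous in $t_1,t_2$.

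For such $t_1,t_2$: multiplying (AC) by $\phi\,\Phi'(\varphi^\varepsilon)$ gives the exact identity $\bigl.\int\phi\,\Phi(\varphi^\varepsilon(\cdot,t))\,dx\bigr|_{t_1}^{t_2}=\int_{t_1}^{t_2}\!\int\phi\,\Phi'(\varphi^\varepsilon)\,\partial_t\varphi^\varepsilon\,dx\,dt$, whose left-hand side converges to the left-hand side of \eqref{BV}, so everything reduces to the right-hand side. Next I would discard the discrepancy: since $\Phi'(\varphi^\varepsilon)=\sqrt{2W(\varphi^\varepsilon)}/\sigma$ and $\bigl|\sqrt{2W(\varphi^\varepsilon)}-\varepsilon|\nabla\varphi^\varepsilon|\bigr|\le(2\varepsilon|\xi^\varepsilon|)^{1/2}$, Cauchy--Schwarz with the dissipation bound and $\iint|\xi^\varepsilon|\to0$ yields
\[
\int_{t_1}^{t_2}\!\!\int\phi\,\Phi'(\varphi^\varepsilon)\partial_t\varphi^\varepsilon\,dx\,dt=\frac1\sigma\int_{t_1}^{t_2}\!\!\int\phi\,(\varepsilon\,\partial_t\varphi^\varepsilon\nabla\varphi^\varepsilon)\cdot\tfrac{\nabla\varphi^\varepsilon}{|\nabla\varphi^\varepsilon|}\,dx\,dt+o(1).
\]
Now $\varepsilon\,\partial_t\varphi^\varepsilon\nabla\varphi^\varepsilon$ has a divergence structure: from (AC), $\varepsilon\,\partial_t\varphi^\varepsilon\,\partial_j\varphi^\varepsilon=\sum_i\partial_i\bigl(\varepsilon\,\partial_i\varphi^\varepsilon\partial_j\varphi^\varepsilon\bigr)-\partial_j\bigl(\tfrac\varepsilon2|\nabla\varphi^\varepsilon|^2+\tfrac1\varepsilon W(\varphi^\varepsilon)\bigr)$, i.e.\ $\varepsilon\,\partial_t\varphi^\varepsilon\nabla\varphi^\varepsilon=\operatorname{div}T^\varepsilon$. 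Integrating by parts and using $\mu^\varepsilon_t\rightharpoonup\mu_t$, integrality of the limit (so $\varepsilon\,\partial_i\varphi^\varepsilon\partial_j\varphi^\varepsilon\,\mathcal L^n\rightharpoonup\sigma\,\nu_i\nu_j\,\mu_t$), $\iint|\xi^\varepsilon|\to0$, dominated convergence in $t$, and the definition of the generalized mean curvature, one obtains, for every \emph{fixed} $G\in C^1_c$ — and hence, by the uniform local total-variation bound on $\varepsilon\,\partial_t\varphi^\varepsilon\nabla\varphi^\varepsilon\,\mathcal L^n$, for every $G\in C_c$ —
\[
\tfrac1\sigma\iint G\cdot(\varepsilon\,\partial_t\varphi^\varepsilon\nabla\varphi^\varepsilon)\,dx\,dt\ \longrightarrow\ -\iint G\cdot h\,d\mu_t\,dt .
\]

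The main obstacle is that the ``test field'' in the displayed integral is $\phi\,\nabla\varphi^\varepsilon/|\nabla\varphi^\varepsilon|$, which is neither continuous nor $\varepsilon$-independent, so the last limit does not apply directly. The resolution rests on the fact that, \emph{together with} $|\nabla\Phi(\varphi^\varepsilon)|\,\mathcal L^n\rightharpoonup\mu_t$ (from AM--GM and $\iint|\xi^\varepsilon|\to0$), one has the \emph{vector} convergence $\nabla\Phi(\varphi^\varepsilon)\,\mathcal L^n\rightharpoonup\nabla\chi_{E_t}=-\nu\,|\nabla\chi_{E_t}|$, which comes for free from $\Phi(\varphi^\varepsilon)\to\chi_{E_t}$ in $L^1$. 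Thus on the part of $\operatorname{supp}\mu_t$ lying outside the reduced boundary $\partial^*E_t$ — the higher-multiplicity ``hidden'' interface — the directions $\nabla\varphi^\varepsilon/|\nabla\varphi^\varepsilon|$ of neighbouring transition layers cancel in the pairing, while on $\partial^*E_t$ they converge to $-\nu$ against the genuine perimeter $|\nabla\chi_{E_t}|$. Making this rigorous is the technical heart: one must show that neighbouring layers carry asymptotically the same normal velocity — this is where the dissipation bound $\iint\varepsilon|\partial_t\varphi^\varepsilon|^2<\infty$ is essential, since it precludes fast relative oscillations — and that, via integrality of the limit varifold, $h$ restricted to $\partial^*E_t$ is well defined with $\nu$ its unit normal there. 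Combining this localization with the previous limit gives
\[
\tfrac1\sigma\int_{t_1}^{t_2}\!\!\int\phi\,(\varepsilon\,\partial_t\varphi^\varepsilon\nabla\varphi^\varepsilon)\cdot\tfrac{\nabla\varphi^\varepsilon}{|\nabla\varphi^\varepsilon|}\,dx\,dt\ \longrightarrow\ \int_{t_1}^{t_2}\!\!\int\phi\,(h\cdot\nu)\,d|\nabla\chi_{E_t}|\,dt ,
\]
which together with the first two paragraphs proves \eqref{BV} for good $t_1<t_2$, and then for all $0\le t_1<t_2<\infty$ by the continuity remark.
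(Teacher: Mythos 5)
Your overall strategy differs from the paper's: you try to pass to the limit at the $\varepsilon$-level in the identity $\int\phi\,\Phi(\varphi^\varepsilon)\,dx\big|_{t_1}^{t_2}=\iint\phi\,\Phi'(\varphi^\varepsilon)\partial_t\varphi^\varepsilon\,dx\,dt$, whereas the paper works almost entirely with the limit objects. Your first steps are fine (the discrepancy replacement, the divergence structure of $\varepsilon\,\partial_t\varphi^\varepsilon\nabla\varphi^\varepsilon$, and the convergence of $\frac1\sigma\iint G\cdot(\varepsilon\,\partial_t\varphi^\varepsilon\nabla\varphi^\varepsilon)\to-\iint G\cdot h\,d\mu_t dt$ for fixed continuous $G$, which is the standard first-variation argument). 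But the step you yourself call the ``technical heart'' is exactly where the proof is missing, not merely technical: you must show that against the $\varepsilon$-dependent, discontinuous field $\phi\,\nabla\varphi^\varepsilon/|\nabla\varphi^\varepsilon|$ the limit localizes to $(h\cdot\nu)\,|\nabla\chi_{E_t}|$ rather than producing a contribution from the higher-multiplicity (``hidden'') part of $\mu_t$. The mechanism you offer --- that the dissipation bound $\iint\varepsilon(\partial_t\varphi^\varepsilon)^2<\infty$ ``precludes fast relative oscillations'' so that neighbouring layers carry the same velocity and their directions cancel --- is a heuristic, not an argument; no estimate is given that converts this bound into the asserted cancellation, and in fact the quantity that must cancel is the signed pairing of velocity with opposite orientations on even-multiplicity pieces, which is precisely as hard to control as the original statement. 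Note also that since $\Phi'(\varphi^\varepsilon)\partial_t\varphi^\varepsilon=\partial_t\Phi(\varphi^\varepsilon)$, the distributional limit of your right-hand side is simply $\partial_t\chi_E$; so your whole scheme reduces to identifying the space-time measure $\partial_t\chi_E$ with $(h\cdot\nu_{E_t})\,\mathcal H^{n-1}\llcorner_{\partial^*E_t}\,dt$, and that identification is never actually carried out.

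For comparison, the paper's proof supplies a rigorous substitute for this step at the level of the limit: the only place the dissipation bound enters is in proving the absolute continuity $|\nabla'\chi_E|\ll d\mu_t dt$ of the space-time perimeter measure (via Hutchinson's measure-function-pair compactness applied to $\sqrt{1+(\partial_t\varphi^\varepsilon/|\nabla\varphi^\varepsilon|)^2}$ against $\varepsilon|\nabla\varphi^\varepsilon|^2\,d\mathcal L^n dt$, together with the vanishing discrepancy and lower semicontinuity). Then, since the Brakke flow is an $L^2$ flow with velocity $h$, one gets $(h,1)\in T_{(x,t)}\mu$ $\mu$-a.e., proves that $T_{(x,t)}\mu=T_{(x,t)}(\partial^*E)$ $\mathcal H^n$-a.e.\ on $\partial^*E$, and uses the slicing lemma for sets of finite perimeter together with the co-area formula to obtain $\nu_E=(1+|h|^2)^{-1/2}(\nu_{E_t},-h\cdot\nu_{E_t})$; the formula \eqref{BV} then follows from the space-time Gauss--Green theorem. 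If you want to salvage your route, you would need to prove your localization claim, and the natural way to do so is essentially to reproduce this absolute-continuity plus tangent-plane analysis, at which point the $\varepsilon$-level manipulations in your second paragraph become unnecessary.
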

        Note that (\ref{BV}) gives an explicit formula for the volume change of the phase: assuming that the inital datum $ E_0 $ is bounded, we have
        \begin{equation}
            \mathcal{ L }^n ( E_{ t_2 } ) -
            \mathcal{ L }^n ( E_{ t_1 } ) = \int_{ t_1 }^{ t_2 }
            \int_{ \R^n }( h ( x , t ) \cdot \nu ( x , t ) ) \,d | \nabla \varphi ( \cdot , t ) | ( x ) d t.
        \end{equation}
        For smooth MCFs, the above formula holds naturally while it is not obvious for generalized MCFs. Luckhaus and Sturzenhecker \cite{luckhaus1995implicit} introduced the notion of the BV flow by characterizing the motion law of the boundary using (\ref{BV}) and BV functions. Namely, a family of sets of finite perimeter $ \{ E_t \}_{ t \geq 0 } $ is a BV flow if the perimeter measures $ \{ | \nabla \chi_{ E_t } | \}_{ t \geq 0 } $ have the generalized mean curvature vector $ h ( \cdot , t ) $ satisfying (\ref{BV}). The generalized BV flow proposed by Stuvard and Tonegawa \cite{StuvardTonegawa+2022} is a pair of phase function and Brakke flow, which allows possible higher integer multiplicities ($ \geq 2 $). They proved the existence under a very general setting (even with multi-phase cases). If (\ref{BV}) holds, it is known by the work of Fischer et al. \cite{fischer2020local} that the BV flow is unique until some topological changes occur, thus partially resolving the issue of superfluous non-uniqueness of Brakke flows. There are some existence results of BV flows such as \cite{luckhaus1995implicit,laux2016convergence,laux2018convergence}, but these studies impose a reasonable (but non-trivial) assumption that the approximate solutions converge to the limit without loss of surface energy. In contrast, we prove the existence of (generalized) BV flows without any extra assumptions, with the caveat that the accompanying Brakke flow may possess possible higher integer multiplicities. We mention that the similar conclusion can be derived \cite{tashiro2023existence} for flows obtained by the elliptic regularization \cite{ilmanen1994elliptic}. 
	\vskip.5\baselineskip
        We next give a detailed account of the related works on the MCFs using the Allen--Cahn equation. As a pioneering work, Ilmanen \cite{ilmanen1993convergence} 
        proved that the limit measure of (AC)
        is a rectifiable Brakke flow by using
        Huisken's monotonicity formula \cite{huisken1990}.
        Additionally Tonegawa \cite{tonegawa2003integrality} proved that 
        the limit measure is integral. By modifying the equation (\ref{AC0}), varifold solutions with various additional terms are studied and we mention \cite{takasao2016existence,qi2018convergence,takasao2020existence,jiang2020convergence,takasao2023existence}. As a conditional result, Laux and Simon \cite{laux2018convergence} gave the time-local existence of BV flow with forcing term in the multi-phase case. Hensel and Laux \cite{hensel2021new} proposed the new concept of weak solution, called the De Giorgi type flow, and studied the existence and the weak-strong uniqueness property. By using the relative entropy, the simplified proof of the singular limit of (\ref{AC0}) is given in \cite{fischer2020convergence}
        and the case of coupling with the Navier--Stokes equation \cite{hensel2023sharp} and volume preservation \cite{kroemer2023quantitative}
        have been studied.
    \vskip.5\baselineskip
    The key observation of the present paper is that the two properties, being an $ L^2 $ flow (which follows from being Brakke flow) and the absolute continuity of phase boundary measure with respect to the Brakke flow (both as space-time measures), lead to equality (\ref{BV}). To this end, we find the convergence of the velocity vector representing the motion of phase boundaries using the concept of measure function pairs by Hutchinson \cite{hutchinson1986second}. More precisely, if $ \{ \mu_t \}_{ t \geq 0 } $ is a Brakke flow arising from the Allen--Cahn equation, by the existence of the velocity, we may obtain $ d | ( \nabla , \partial_t ) \varphi | \ll d \mu_t d t $, where $ | ( \nabla , \partial_t ) \varphi | $ is the space-time perimeter measure of $ \varphi $. Once this is done, we may recover the formula (\ref{BV}) using a suitable version of the co-area formula (see \cite[Theorem 13.4]{maggi2012sets} for example) from geometric measure theory. The idea to prove the main results of this paper is 
    similar to \cite{roger2008allen,StuvardTonegawa+2022,tashiro2023existence}, though there are 
    some fine differences. We also point out that 
    similar conclusions can be derived for more 
    general flows as in \cite{jiang2020convergence,qi2018convergence,takasao2016existence,takasao2017existence,takasao2020existence,takasao2023existence}
    by the same strategy.
	\vskip.5\baselineskip
	The paper is organized as follows. In Section \ref{Preliminaries}, we set our notation and explain the Allen--Cahn equation and the main result. In Section \ref{proofofmainresult1}, we show that the absolute continuity between the perimeter measure of the phase function and the Brakke flow, and the boundaries motion by the mean curvature is expressed in the equality (\ref{BV}), and we then prove that the Allen--Cahn equation gives a generalized BV flow in
 the limit.
	
	\section{Preliminaries and Main Results}\label{Preliminaries}
	
	\subsection{Basic Notation}
	We shall use the same notation for the most part adopted in \cite{StuvardTonegawa+2022,tashiro2023existence}.
    \vskip.5\baselineskip
    In particular, the ambient space we will be working in is the Euclidean space $ \R^n $, and $ \R^+ $ will denote the interval $ [ 0 , \infty ) $. The coordinates $ ( x , t ) $ are set in the product space $ \R^n \times \R $, and $ t $ will be thought of and referred to as ``time''. The symbols $ \mathbf{ p } $ and $ \mathbf{ q } $ will denote the projections of $ \R^n \times \R $ onto its factor, so that $ \mathbf{ p } ( x , t ) = x $ and $ \mathbf{ q } ( x , t ) = t $. If $ A \subset \R^n $ is (Borel) measurable, $ \mathcal{L}^n(A) $ will denote the Lebesgue measure of $ A $, whereas $ \mathcal{ H }^k (A) $ denotes the $ k $-dimensional Hausdorff measure of $ A $. When $ x \in \R^n $ and $ r > 0 $, $ B_r ( x ) $ denotes the closed ball centered at $ x $ with radius $ r $. More generally, if $ k $ is an integer, then $ B^k_r ( x ) $ will denote closed balls in $ \R^k $. The symbols $ \nabla , \nabla^{ \prime } , \Delta , \nabla^2 $ denote the spatial gradient and the full gradient in $ \R^n \times \R $, Laplacian, and Hessian, respectively. The symbol $ \partial_t $ will denote the time derivative.
	\vskip.5\baselineskip
	A positive Radon measure $ \mu $ on $ \R^n $ (or ``space-time'' $ \R^{ n } \times \R^+ $) is always regarded as a positive linear functional on the space $ C^0_c ( \R^n) $ of continuous and compactly supported functions, with the pairing denoted by $ \mu ( \phi ) $ for $ \phi \in C^0_c (\R^n) $. The restriction of $ \mu $ to a Borel set $ A $ is denoted $ \mu \llcorner_A $, so that $ ( \mu \llcorner_A ) ( E ) := \mu ( A \cap E ) $ for any Borel sets $ E \subset \R^n $. The support of $ \mu $ is denoted $ \mathrm{ supp }\, \mu $, and it is the closed set defined by
    \[
        \mathrm{ supp }\, \mu := \left\{ x \in \R^n\,\middle|\,\mu ( B_r ( x ) ) > 0 \text{ for all } r > 0\,\right\}.
    \]
    For $ 1 \leq p \leq \infty $, the space of $ p $-integrable functions with respect to $ \mu $ is denoted $ L^p ( \mu ) $. If $\mu=\mathcal L^n$, $ L^p (\mathcal L^n ) $ is simply written $ L^p ( \R^n ) $. For a signed or vector-valued measure $ \mu $, $ | \mu | $ denotes its total variation. For two Radon measures $ \mu $ and $ \overline{ \mu } $, when the measure $ \overline{ \mu } $ is absolutely continuous with respect to $ \mu $, we write $ \overline{ \mu } \ll \mu $.
    \vskip.5\baselineskip
    We say that a function $ f \in L^1 ( \R^n ) $ has a bounded variation, written $ f \in BV ( \R^n ) $, if
    \[
    \sup \left\{ \int_{\R^n} f \,\mathrm{ div } X\,dx\,\middle|\,X \in C^1_c ( \R^n ; \R^n ) ,\,\| X \|_{ C^0 } \leq 1 \right\} < \infty.
    \]
    If $ f \in BV ( \R^n ) $, then there exists an $ \R^n $-valued Radon measure (which we will call the measure derivative of $ f $ denoted by $ \nabla f $) satisfying
    \[
        \int_{\R^n} f \,\mathrm{ div } X\,dx = - \int_{\R^n} X \cdot d \nabla f\,\text{for all}\,X \in C^1_c ( \R^n ; \R^n).
    \]
	For a set $ E \subset \R^n $, $ \chi_E $ is the characteristic function of $ E $, defined by $ \chi_E = 1 $ if $ x \in E $ and $ \chi_E = 0 $ otherwise. We say that $ E $ has a (locally) finite perimeter if $ \chi_E \in BV ( \R^n ) ( \in BV_{loc} ( \R^n ) ) $. When $ E $ is a set of (locally) finite perimeter, then the measure derivative $ \nabla \chi_E $ is the associated Gauss-Green measure, and its total variation $ | \nabla \chi_E | $ is the perimeter measure; by De Giorgi's structure theorem, $ | \nabla \chi_E | = \mathcal{ H }^{ n - 1 } \llcorner_{ \partial^* E } $, where $ \partial^* E $ is the reduced boundary of $ E $, and $ \nabla \chi_E = - \nu_E\,| \nabla \chi_E | = - \nu_E\,\mathcal{ H }^{ n - 1 } \llcorner_{ \partial^* E } $, where $ \nu_E $ is the outer pointing unit normal vector field to $ \partial^* E $. 
	\vskip.5\baselineskip
    A subset $ M \subset \R^n $ is countably $ k $-rectifiable if it admits a covering 
    \[
        M \subset Z \cup \bigcup_{ i \in \N } f_i ( \R^k )
    \]
    where $ \mathcal{ H }^k ( Z ) = 0 $ and $ f_k : \R^k \to \R^n $ is Lipschitz. If $ M $ is countably $ k $-rectifiable, $ \mathcal{ H }^k $-measurable and $ \mathcal H^k ( M ) < \infty $, $ M $ has a measure-theoretic tangent plane called approximate tangent plane for $ \mathcal H^k $-a.e.$ \, x \in M $ (\cite[Theorem 11.6]{simon1983lectures}), denoted by $ T_x\,M $. We may simply refer to it as the tangent plane at $ x \in M $ without fear of confusion. A Radon measure $ \mu $ is said to be $ k $-rectifiable if there are a countably $ k $-rectifiable, $ \mathcal{ H }^k $-measurable set $ M $ and a positive function $ \theta \in L^1_{ loc } ( \mathcal{ H }^k \llcorner_{ M } ) $ such that $ \mu = \theta\,\mathcal{ H }^k \llcorner_{ M } $. This function $ \theta $ is called multiplicity of $ \mu $. The approximate tangent plane of $ M $ in this case (which exists $ \mu $-a.e.) is denoted by $ T_x\,\mu $. When $ \theta $ is an integer for $ \mu $-a.e., $ \mu $ is said to be integral. The first variation $ \delta \mu : C^1_c ( \R^n ; \R^n ) \to \R $ of a rectifiable Radon measure $ \mu $ is defined by
	\[
	\delta \mu ( X ) = \int_{\R^n } \mathrm{ div }_{ T_x\, \mu } X\,d\mu,
	\]
	where $ P_{ T_x \,\mu } $ is the orthogonal projection from $ \R^n $ to $ T_x\, \mu $, and $ \mathrm{ div }_{ T_x\, \mu } X = \mathrm{ tr } ( P_{ T_x\, \mu } \nabla X ) $. For an open set $ U \subset \R^n $, the total variation $ | \delta \mu | ( U ) $ of $ \mu $ is defined by
	\[
	| \delta \mu | ( U ) = \sup \left\{ \delta \mu ( X )\,\middle|\,X \in C^1_c ( U ; \R^n ) ,\,\| X \|_{C^0} \leq 1 \right\}.
	\]
	If the total variation $ | \delta \mu |(\tilde U)$ is finite for any bounded subset $\tilde U$ of $ U $, then $ \delta \mu $ is called locally bounded, and we can regard $ | \delta \mu | $ as a measure.\,If $ | \delta \mu | \ll \mu $, then the Radon--Nikod\'ym derivative (times $-1$) is called
	the generalized mean curvature vector $ h $ of $ \mu $, and we have
	\[
	\delta \mu ( X ) = - \int_{ \R^n } X \cdot h\,d\mu \quad \text{for all } X \in C^1_c ( \R^n; \R^n ).
	\]
	If $ \mu $ is integral, then $ h $ and $ T_x\, \mu $ are orthogonal for $ \mu $-a.e.~by
	Brakke's perpendicularity theorem \cite[Chapter 5]{brakke1978motion}.
	
	\subsection{Weak Notions of Mean Curvature Flow}
	In this subsection, we introduce some weak solutions to the MCF as well as \cite{StuvardTonegawa+2022,tashiro2023existence}. We briefly define and comment upon the three of interest in the present paper: We begin with the notion of Brakke flow introduced by Brakke \cite{brakke1978motion}.
		\begin{definition}
		A family of Radon measures $ \{  \mu_t \}_{ t \in \R^+ } $ in
  $\mathbb R^n$ 
  is \textit{an} ($ n - 1 $)-\textit{dimensional Brakke flow}                        
		if the following four conditions are satisfied:
		\begin{description}
			\item[\quad\textup{(1)}] For a.e. $ t \in \R^+ $, $\mu_t$ is integral and 
			$ \delta \mu_t $ is locally bounded and absolutely continuous with respect to $  \mu_t $ (thus the generalized mean curvature exists for a.e.~$t$, denoted by $h$).
			\item[\quad\textup{(2)}] For all $ s>0 $ and all compact set $ K \subset \R^n$, $ \sup_{ t \in [ 0 , s ] } \mu_t ( K ) < \infty $.
			\item[\quad\textup{(3)}] The generalized mean curvature $ h $ satisfies $ h \in L^2 (d\mu_tdt) $.
			\item[\quad\textup{(4)}] For all $ 0 \leq t_1 < t_2 < \infty $ and all test functions $ \phi \in C^1_c (\R^n \times \R^+ ; \R^+ )$,
			\begin{equation}
				\begin{split}
					\mu_{ t_2 } ( &\phi ( \cdot , t_2 ) ) - \mu_{ t_1 } ( \phi ( \cdot , t_1 ) )\\
					&\leq \int_{ t_1 }^{ t_2 } \int_{ \R^n} \Big( \nabla \phi ( x , t ) - \phi ( x , t )\,h ( x , t ) \Big) \cdot h ( x , t ) + \partial_t \phi ( x , t )\ d \mu_t ( x ) dt.
				\end{split}
				\label{Brakkeineq}
			\end{equation}
		\end{description}
		\label{Brakke}
	\end{definition}
	The inequality (\ref{Brakkeineq}) is motivated by the following identity:
	\begin{equation}
		\left.\int_{ M_t } \phi ( x , t )\,d\mathcal{ H }^{ n - 1 }\right|_{ t = t_1 }^{ t_2 } = \int_{ t_1 }^{ t_2 } \int_{ M_t } ( \nabla \phi - \phi\,h ) \cdot v + \partial_t \phi\,d\mathcal{ H }^{ n - 1 } dt, \label{motivate}
	\end{equation}
	where $ \{ M_t \}_{ t \in [ 0 , T ) } $ is a family of ($ n - 1 $)-dimensional smooth surfaces, $ h ( \cdot , t ) $ is the mean curvature vector of $ M_t $, and $ v $ is the normal velocity vector of $ M_t $. In particular, if $ \{ M_t \}_{ t \in [ 0 , T ) } $ is a smooth MCF (hence $ v = h $), setting $ \mu_t := \mathcal{ H }^{ n - 1 } \llcorner_{ M_t } $ defines a Brakke flow for which (\ref{Brakkeineq}) is satisfied with the equality. Conversely, if $ \mu_t = \mathcal{ H }^{ n - 1 } \llcorner_{ M_t } $ with smooth $ M_t $ satisfies (\ref{Brakkeineq}), then one can prove that $ \{ M_t \}_{ t \in [ 0 , T ) } $ is a classical solution to the MCF. The notion of Brakke flow is equivalently (and originally in \cite{brakke1978motion}) formulated in the framework of varifolds, but we use the above slightly less general formulation using Radon measures, mainly for convenience.
	\vskip.5\baselineskip
	The following definition of $ L^2 $ flow (modified slightly for our purpose) was given by Mugnai and R\"{o}ger \cite{roger2008allen}.	
	\begin{definition}[$ L^2 $ flow]
		A family of Radon measures $ \{ \mu_t \}_{ t \in \R^+ } $ in $ \R^n $ is \textit{an} ($ n - 1 $)-\textit{dimensional $ L^2 $ flow} if it satisfies (1)-(2) in Definition \ref{Brakke} as well as the following:
		\begin{description}
			\item[\textup{(a)}] The generalized mean curvature $ h ( \cdot , t ) $ (which exists for a.e.\,$ t \in \R^+ $ by (1)) satisfies $ h ( \cdot , t ) \in L^2(  \mu_t ; \R^n ) $, and $ d \mu := d \mu_t d t $ is a Radon measure on $ \R^n \times \R^+ $.
			\item[\textup{(b)}] There exist a vector field $ v \in L^2 ( \mu ; \R^n ) $ and a constant $ C = C ( \mu ) > 0 $ such that
			\begin{description}
				\item[\textup{(b'1)}] $ v ( x , t ) \perp T_x \, \mu_t $ for $ \mu $-a.e.\,$ ( x , t ) \in \R^n \times \R^+ $,
				\item[\textup{(b'2)}] For every test functions $ \phi \in C^1_c ( \R^n \times \R^+ ) $, it holds
				\begin{equation}
					\left| \int_0^{ \infty } \int_{ \R^n } \partial_t \phi ( x , t ) + \nabla \phi ( x , t ) \cdot v ( x , t )\ d \mu_t ( x ) dt\,\right| \leq C\,\| \phi \|_{ C^0 }.
					\label{L2flow}
				\end{equation}
			\end{description}
		\end{description}
		\label{L2def}
	\end{definition}
	The vector field $ v $ satisfying (\ref{L2flow}) is called the generalized velocity vector in the sense of $ L^2 $ flow. This definition interprets equality (\ref{motivate}) as a functional expression of the area change.
	\vskip.5\baselineskip
	Finally, we introduce the concept of generalized BV flow suggested by Stuvard and Tonegawa \cite{StuvardTonegawa+2022}.
	\begin{definition}[Generalized BV flow]
		Let $ \{ \mu_t \}_{ t \in \R^+ } $ and $ \{ E_t \}_{ t \in \R^+ } $ be families of Radon measures and sets of finite perimeter, respectively. The pair $ ( \{ \mu_t \}_{ t \in \R^+ } , \{ E_t \}_{ t \in \R^+} ) $ is \textit{a generalized BV flow} if all of the following hold:
		\begin{description}
			\item[\quad\textrm{(i)}] $ \{ \mu_t\}_{ t \in \R^+ } $ is a Brakke flow as in Definition \ref{Brakke}.
			\item[\quad\textrm{(ii)}] For all $ t \in \R^+ $, $ \| \nabla \chi_{ E_t } \| \leq \mu_t $.
			\item[\quad\textrm{(iii)}] For all $ 0 \leq t_1 < t_2 < \infty $ and all test functions $ \phi \in C^1_c ( \R^n \times \R^+ ) $,
			\begin{equation}
				\begin{split}
				\left.\int_{ E_t } \phi ( x , t )\ d x\,\right|^{ t_2 }_{ t = t_1 } &= \int_{ t_1 }^{ t_2 } \int_{ E_t } \partial_t \phi ( x , t )\ d x d t + \int_{ t_1 }^{ t_2 } \int_{ \partial^* E_t } \phi ( x , t )\,( h ( x , t ) \cdot \nu_{ E_t } ( x ) )\ d \mathcal{ H }^{ n - 1 } ( x ) d t.
				\end{split}
                \label{theareachange}
			\end{equation}
		\end{description}
		\label{generalizedBVflow}
	\end{definition}
	If $ \mu_t $ and $ E_t $ satisfy the above definition, we say ``$ v = h $'' in the sense of generalized BV flow. This definition expresses that the interface $ \partial^* E_t $ is driven by the mean curvature of $ \mu_t $. If $ \mu_t = \| \nabla \chi_{ E_t } \| $ for a.e.\,$ t $, the characterization \eqref{theareachange} coincides with the notion of BV flow considered by Luckhaus--Struzenhecker in \cite{luckhaus1995implicit} since the mean curvature of $ \partial^* E_t $ is naturally defined to be $ h ( \cdot, t ) $ in this case. On the other hand, while the original BV flow is characterized only by \eqref{theareachange}, here $ \mu_t $ is additionally a Brakke flow to which one can apply the local regularity theorems \cite{kasai2014general,tonegawa2014second,stuvard2022endtime}.

	
	\subsection{Assumptions and main result}
	\label{mainresults}
        First, we work under the following: 
	\begin{assumption}
		\label{assumption}
            Let $ E_0 \subset \R^n $ be a set of finite perimeter with 
            \begin{equation}
                \mathcal{ L }^n ( E_0 ) + \| \nabla \chi_{ E_0 }\| ( \R^n )< \infty. 
            \end{equation}
        \end{assumption}
        With this $ E_0 $ given, we can always have 
        a sequence of good initial data for (AC):
        \begin{lemma}
            There exist sequences of $ \varepsilon_i \rightarrow 0 $ and $ C^{ \infty } $ functions $ \varphi_0^{ \varepsilon_i }$ such that
            \begin{equation}
		      \begin{split}
                    &- 1 \leq \varphi_0^{ \varepsilon_i } \leq 1, \quad \lim_{ i \rightarrow \infty } \int_{ \R^n } \left| \frac{ \varphi^{ \varepsilon_i }_0 + 1 }{ 2 } - \chi_{ E_0 } \right| d \mathcal{L}^n = 0,\\
                    &\frac{ 1 }{ \sigma } \left( \frac{ \varepsilon_i \, | \nabla \varphi^{ \varepsilon_i }_0 |^2 }{ 2 } + \frac{ W ( \varphi^{ \varepsilon_i }_0 ) }{ \varepsilon_i } \right) d\mathcal{ L }^n \rightharpoonup  \| \nabla \chi_{ E_0 } \|\ \text{as}\ i \to \infty.
		      \end{split}
            \label{apini}
            \end{equation}
        Here $ \sigma = \int_{ - 1 }^{ 1 } \sqrt{ 2 W ( s ) } \, d s $ is the surface energy constant.
        \end{lemma}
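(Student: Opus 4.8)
The plan is the classical Modica--Mortola recovery-sequence construction, preceded by a smoothing of $E_0$ and followed by a diagonal extraction. First I would reduce to a smooth set: since $\mathcal L^n(E_0)<\infty$, replacing $E_0$ by $E_0\cap B_R$ along a suitable sequence $R\to\infty$ makes the set bounded, and a bounded set of finite perimeter can be approximated in the standard way (mollification, then the choice via Sard's theorem and the coarea formula of a regular superlevel set) by bounded open sets $F_k\subset\R^n$ with $C^\infty$ boundary such that $\chi_{F_k}\to\chi_{E_0}$ in $L^1(\R^n)$ and $\mathcal H^{n-1}(\partial F_k)=\|\nabla\chi_{F_k}\|(\R^n)\to\|\nabla\chi_{E_0}\|(\R^n)$; by lower semicontinuity of the perimeter on open sets, the convergence of total masses then upgrades to the weak-$*$ convergence $\|\nabla\chi_{F_k}\|\rightharpoonup\|\nabla\chi_{E_0}\|$.

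Next, for each fixed smooth bounded $F=F_k$ I would build the optimal-profile recovery sequence. Let $q\in C^\infty(\R)$ be the increasing standing wave, i.e.\ the solution of $q''=W'(q)$ with $q(0)=0$ and $q(\pm\infty)=\pm1$; it satisfies the equipartition identity $\tfrac12(q')^2=W(q)$, obeys $q\mp1=O(e^{-c|s|})$ and $q'=O(e^{-c|s|})$ as $s\to\pm\infty$, and $\int_\R(q')^2\,ds=\int_{-1}^1\sqrt{2W(u)}\,du=\sigma$. Writing $d_F$ for the signed distance to $\partial F$ (positive inside $F$), which is $C^\infty$ on a fixed tubular neighbourhood $\{|d_F|<\rho\}$, and fixing $\eta\in C^\infty(\R)$ with $0\le\eta\le1$, $\eta\equiv1$ on $[-\rho/2,\rho/2]$ and $\eta\equiv0$ off $(-\rho,\rho)$, I would set
\[
  \varphi^{\varepsilon,F}(x):=\eta(d_F(x))\,q\!\left(\frac{d_F(x)}{\varepsilon}\right)+\bigl(1-\eta(d_F(x))\bigr)\,\mathrm{sgn}\,(d_F(x)).
\]
This is $C^\infty$ on $\R^n$ (the distance function enters only where it is smooth, and $\varphi^{\varepsilon,F}\equiv\pm1$ off the tube) and satisfies $-1\le\varphi^{\varepsilon,F}\le1$ since $q$ is valued in $(-1,1)$. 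On $\{|d_F|<\rho/2\}$ one has $\varphi^{\varepsilon,F}=q(d_F/\varepsilon)$ and $|\nabla d_F|=1$, so by equipartition the energy density there equals $\tfrac1\varepsilon(q'(d_F/\varepsilon))^2$, and the coarea formula applied to $d_F$ yields, for every $\phi\in C^0_c(\R^n)$,
\[
  \int_{\{|d_F|<\rho/2\}}\!\!\phi\,\frac{(q'(d_F/\varepsilon))^2}{\varepsilon}\,dx=\int_{-\rho/2}^{\rho/2}\frac{(q'(s/\varepsilon))^2}{\varepsilon}\Bigl(\int_{\{d_F=s\}}\!\!\phi\,d\mathcal H^{n-1}\Bigr)ds\;\xrightarrow[\varepsilon\to0]{}\;\sigma\int_{\partial F}\!\!\phi\,d\mathcal H^{n-1},
\]
because $\tfrac1\varepsilon(q'(\cdot/\varepsilon))^2\,\mathcal L^1\rightharpoonup\sigma\delta_0$ and $s\mapsto\int_{\{d_F=s\}}\phi\,d\mathcal H^{n-1}$ is continuous at $s=0$ by smoothness of $\partial F$. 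By the exponential decay of $q\mp1$ and $q'$ the transition annulus $\{\rho/2\le|d_F|<\rho\}$ (of finite measure) contributes $o(1)$, and off $\{|d_F|<\rho\}$ the energy density vanishes; hence the normalised energy measures $\tfrac1\sigma\bigl(\tfrac{\varepsilon|\nabla\varphi^{\varepsilon,F}|^2}{2}+\tfrac{W(\varphi^{\varepsilon,F})}{\varepsilon}\bigr)\mathcal L^n$ converge weakly-$*$, and in total mass, to $\|\nabla\chi_F\|$, while $\tfrac12(\varphi^{\varepsilon,F}+1)\to\chi_F$ in $L^1(\R^n)$ by dominated convergence on the fixed bounded set $\{d_F>-\rho\}$.

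Finally I would diagonalise: weak-$*$ convergence of uniformly mass-bounded Radon measures on $\R^n$ is metrizable, and for each fixed $k$ the three quantities attached to $\varphi^{\varepsilon,F_k}$ tend as $\varepsilon\to0$ to the quantities attached to $F_k$, which in turn tend as $k\to\infty$ to those attached to $E_0$; choosing $\varepsilon_k\downarrow0$ fast enough and relabelling gives $\varphi_0^{\varepsilon_i}:=\varphi^{\varepsilon_i,F_i}$ satisfying \eqref{apini}. The only genuinely delicate point is arranging global $C^\infty$ regularity of the recovery sequence — the signed distance $d_F$ is not smooth beyond the cut locus, which is why the cutoff $\eta$ is inserted and why one must verify, using the exponential decay of $q$, that the glued-in transition carries no asymptotic energy; the smoothing of $E_0$, the coarea computation and the diagonal step are all routine.
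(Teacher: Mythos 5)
Your proposal is correct and follows essentially the same route as the paper: smooth approximation of $E_0$ by bounded open sets with smooth boundary, composition of the one-dimensional optimal profile with the signed distance function (the paper truncates the distance to keep global smoothness, you instead cut off the profile with $\eta$ — an equivalent device), and a diagonal extraction in $\varepsilon$ and the approximating sets. You merely supply in detail the equipartition/coarea computation that the paper delegates to Modica's work.
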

    \begin{proof}
    By \cite[Theorem 13.8]{maggi2012sets}, there exists a sequence of open sets $ E_0^i \subset \R^n $ with smooth boundary such that $ \chi_{ E_0^i } \rightarrow \chi_{ E_0 } $ in $ L^1 ( \R^n ) $ and $ \| \nabla \chi_{ E_0^i } \| \rightarrow \| \nabla \chi_{ E_0 } \| $ as measures. Let $ \Psi : \R \rightarrow ( - 1 , 1 ) $ be the unique ODE solution for $ \Psi' = \sqrt{ 2 W ( \Psi ) } $ and $ \Psi ( 0 ) = 0 $. Define $ \varphi_0^{ \varepsilon_i } ( x ) := \Psi ( \tilde d_i ( x ) / \varepsilon_i ) $, where $ \tilde d_i $ is the signed distance function from $ \partial E_0^i $ truncated so that it is smooth on $ \R^n $. With suitably small choice of $ \varepsilon_i $, one can show that all of the properties \eqref{apini} are satisfied for this $ \varphi_0^{ \varepsilon_i } $ (see \cite{modica1987gradient}). 
    \end{proof}
    In fact, the particular form of $ \varphi_0^{ \varepsilon_i } $ described in the proof is not required, and only the properties \eqref{apini} matter in the following. 
    The following is extracted from \cite{ilmanen1993convergence,tonegawa2003integrality}:
    \begin{theorem}
	Let $ \varphi_0^{ \varepsilon_i } $ ( the index $ i $ being omitted in the following for simplicity) be a sequence satisfying \eqref{apini}, and let $\varphi^\varepsilon$ be the solution of (AC). Define a time-parametrized measure
    \begin{equation}
		\mu_t^{ \varepsilon } := \frac{ 1 }{ \sigma } \left( \frac{ \varepsilon\,| \nabla \varphi^{ \varepsilon } ( \cdot , t ) |^2 }{ 2 } + \frac{ W ( \varphi^{ \varepsilon } ( \cdot , t ) ) }{ \varepsilon } \right)\,d\mathcal{ L }^n.
    \end{equation}
    Then there exists a further subsequence (denoted $ \varphi^{ \varepsilon } $) such that
	\begin{description}
		\item[\textup{(a)}] $ \mu_t^{ \varepsilon } \rightharpoonup \mu_t $ for all $ t \in \R^+ $ and $ \{ \mu_t \}_{ t \in \R^+ } $ is a Brakke flow,
		\item[\textup{(b)}] $ ( 1 + \varphi^{ \varepsilon } ( \cdot , t ) ) / 2 \rightarrow \chi_{ E_t } $ in locally $ L^1 ( \R^n ) $ for all $ t \in \R^+ $
        and $ \| \nabla \chi_{ E_t }\| \leq \mu_t $ for all $ t \in \R^+ $.
	\end{description}
	\label{Brakketrans}
	\end{theorem}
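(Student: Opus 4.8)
The plan is to follow the now-classical route of Ilmanen \cite{ilmanen1993convergence} and Tonegawa \cite{tonegawa2003integrality}; I only indicate the main steps. First I would extract the a priori estimates from the variational structure of \eqref{AC0}: computing $\tfrac{d}{dt}\mu_t^\varepsilon(\R^n)$ with the help of \eqref{AC0} and integrating by parts gives the dissipation identity
\[
\frac{d}{dt}\,\mu_t^\varepsilon(\R^n)=-\frac{1}{\sigma}\int_{\R^n}\varepsilon\,(\partial_t\varphi^\varepsilon(\cdot,t))^2\,d\mathcal L^n\le 0,
\]
so that $\sup_{t}\mu_t^\varepsilon(\R^n)\le\mu_0^\varepsilon(\R^n)\le C$ uniformly in $\varepsilon$ by \eqref{apini} and $\int_0^\infty\!\!\int_{\R^n}\varepsilon(\partial_t\varphi^\varepsilon)^2\,d\mathcal L^n\,dt\le C\sigma$; a localized version of this computation (or Ilmanen's monotonicity formula) gives $\sup_{t\le s}\mu_t^\varepsilon(K)\le C(K,s)$ for every compact $K$, i.e.\ condition (2) of Definition \ref{Brakke}. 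Using the uniform mass bound, the monotonicity of $t\mapsto\mu_t^\varepsilon(\R^n)$, and a diagonal extraction over a countable dense set of times, one obtains a subsequence with $\mu_t^\varepsilon\rightharpoonup\mu_t$ for \emph{every} $t\in\R^+$; simultaneously $\mu^\varepsilon:=\mu_t^\varepsilon\,dt\rightharpoonup\mu$ on $\R^n\times\R^+$.

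Next I would invoke Ilmanen's Huisken-type monotonicity formula for \eqref{AC0} together with the control of the positive part of the discrepancy $\xi^\varepsilon:=\tfrac{\varepsilon}{2}|\nabla\varphi^\varepsilon|^2-\tfrac{1}{\varepsilon}W(\varphi^\varepsilon)$; passing to the limit yields a uniform upper density bound for $\mu_t$ and the vanishing of $\xi^\varepsilon$, so that $d\mu_t=\lim_\varepsilon\tfrac1\sigma\varepsilon|\nabla\varphi^\varepsilon|^2\,d\mathcal L^n=\lim_\varepsilon\tfrac{2}{\sigma\varepsilon}W(\varphi^\varepsilon)\,d\mathcal L^n$ for a.e.\ $t$. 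Writing the first variation of the diffuse varifold attached to $\mu_t^\varepsilon$ and using the identity $\varepsilon\Delta\varphi^\varepsilon-\tfrac1\varepsilon W'(\varphi^\varepsilon)=\varepsilon\,\partial_t\varphi^\varepsilon$, one sees that, modulo a discrepancy error, $\delta\mu_t^\varepsilon(X)=\tfrac1\sigma\int_{\R^n}\varepsilon\,\partial_t\varphi^\varepsilon\,(\nabla\varphi^\varepsilon\cdot X)\,d\mathcal L^n$; the Cauchy--Schwarz inequality and the estimates above then show that in the limit $\delta\mu_t\ll\mu_t$ for a.e.\ $t$, with generalized mean curvature vector $h\in L^2(d\mu_t\,dt)$, giving condition (3) and the absolute-continuity part of (1). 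Finally, Allard's rectifiability theorem gives rectifiability of $\mu_t$, and Tonegawa's blow-up analysis --- comparing, at $\mathcal H^{n-1}$-a.e.\ point of the limit interface, the diffuse energy in the normal direction with the one-dimensional optimal profile $\Psi$ --- upgrades this to integrality of $\mu_t$ for a.e.\ $t$, which completes (1).

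For the Brakke inequality I would compute $\tfrac{d}{dt}\mu_t^\varepsilon(\phi(\cdot,t))$ from \eqref{AC0} for $\phi\in C^1_c(\R^n\times\R^+;\R^+)$ and integrate by parts to obtain, at the $\varepsilon$-level, the \emph{equality}
\[
\frac{d}{dt}\,\mu_t^\varepsilon(\phi(\cdot,t))=\int_{\R^n}\partial_t\phi\,d\mu_t^\varepsilon-\frac{1}{\sigma}\int_{\R^n}\big(\varepsilon\,\nabla\phi\cdot\nabla\varphi^\varepsilon+\phi\,\varepsilon\,\partial_t\varphi^\varepsilon\big)\,\partial_t\varphi^\varepsilon\,d\mathcal L^n,
\]
in which $-\tfrac1\sigma\phi\,\varepsilon(\partial_t\varphi^\varepsilon)^2\,d\mathcal L^n$ and $-\tfrac1\sigma\varepsilon\,\nabla\varphi^\varepsilon\,\partial_t\varphi^\varepsilon\,d\mathcal L^n$ play the roles of $-\phi|h|^2\,d\mu_t$ and $h\,d\mu_t$ in \eqref{Brakkeineq}. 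Integrating over $[t_1,t_2]$ and letting $\varepsilon\to0$, the left-hand side converges since $\mu_t^\varepsilon$ converges for every $t$, the terms linear in the approximate velocity converge by weak convergence of the relevant space-time measures, the discrepancy error vanishes by the previous step, and the quadratic dissipation term is controlled using $\phi\ge0$ and lower semicontinuity of the $L^2$-norm under weak convergence; this produces the \emph{inequality} \eqref{Brakkeineq}, so $\{\mu_t\}$ is a Brakke flow and (a) holds. For (b): since $\int_0^\infty\!\!\int_{\R^n}\tfrac1\varepsilon W(\varphi^\varepsilon)\,d\mathcal L^n\,dt\le C\sigma$, a subsequence has $\varphi^\varepsilon\to\pm1$ a.e.; with $\Phi(s):=\int_0^s\sqrt{2W(r)}\,dr$, the pointwise bound $|\nabla\Phi(\varphi^\varepsilon)|=\sqrt{2W(\varphi^\varepsilon)}\,|\nabla\varphi^\varepsilon|\le\tfrac\varepsilon2|\nabla\varphi^\varepsilon|^2+\tfrac1\varepsilon W(\varphi^\varepsilon)$ makes $\Phi(\varphi^\varepsilon(\cdot,t))$ uniformly bounded in $BV(\R^n)\cap L^\infty(\R^n)$ for every $t$, while the energy dissipation bound gives uniform $\tfrac12$-H\"older continuity of $t\mapsto\Phi(\varphi^\varepsilon(\cdot,t))$ with values in $L^1(\R^n)$. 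An Arzel\`a--Ascoli/Helly-type selection then forces $(1+\varphi^\varepsilon(\cdot,t))/2\to\chi_{E_t}$ in $L^1_{loc}(\R^n)$ for every $t$, with $E_t$ of finite perimeter, and the Modica--Mortola lower-semicontinuity inequality (see \cite{modica1987gradient}) gives $\|\nabla\chi_{E_t}\|\le\liminf_\varepsilon\mu_t^\varepsilon=\mu_t$ for every $t$.

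The most delicate points are the integrality of $\mu_t$ and the passage to the Brakke inequality. Integrality requires the full strength of Tonegawa's argument --- Allard rectifiability together with a careful blow-up and ODE analysis of the transition profile --- and is exactly where multiplicities $\ge 2$ may appear; in the Brakke inequality one only has weak convergence of the approximate mean-curvature terms, so the quadratic dissipation must be handled by lower semicontinuity, which is precisely what forces an inequality rather than an equality. Both are, however, covered in detail by the cited works \cite{ilmanen1993convergence,tonegawa2003integrality}, from which the present statement is extracted.
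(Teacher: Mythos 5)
Your proposal is correct and follows essentially the same route as the paper, which itself only records that Theorem \ref{Brakketrans} is extracted from \cite{ilmanen1993convergence,tonegawa2003integrality}, with the key point being the vanishing of the discrepancy (via the local estimate of \cite[Lemma 3.3]{tonegawa2003integrality}, since under \eqref{apini} alone the data need not be well-prepared in Ilmanen's sense) and then the verbatim Ilmanen--Tonegawa arguments; your sketch of (a) and of the $BV$/H\"older-in-time compactness argument for (b) matches this. The only minor imprecision is that convergence of $\mu_t^\varepsilon$ at \emph{every} time does not follow from monotonicity of the total mass plus a dense set of times alone; one uses the localized semi-decreasing property that $t\mapsto\mu_t^\varepsilon(\phi)-C(\phi)\,t$ is nonincreasing for each fixed nonnegative test function $\phi$ together with a Helly-type selection, which is exactly how the cited works proceed.
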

    We note that the key element of the proof is the vanishing of the discrepancy measure (see Lemma \ref{discripancy}) which follows from the local point-wise estimate of \cite[Lemma 3.3]{tonegawa2003integrality}. This gives a local Huisken's monotonicity formula and the verbatim proof of \cite{ilmanen1993convergence,tonegawa2003integrality} works to show the claim of Theorem \ref{Brakketrans}. The following claim is the main result of the present paper.
	\begin{theorem}
		\label{mainresult1}
		The pair $ ( \{ \mu_t \}_{ t \in R^+ } , \{ E_t \}_{ t \in \R^+ } ) $ in Theorem \ref{Brakketrans} is a generalized BV flow as described in Definition \ref{generalizedBVflow}.
	\end{theorem}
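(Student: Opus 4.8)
The conditions (i) and (ii) of Definition \ref{generalizedBVflow} are exactly what Theorem \ref{Brakketrans} provides, so the whole content of Theorem \ref{mainresult1} is the area--change identity \eqref{theareachange}. Writing $\varphi(x,t):=\chi_{E_t}(x)$, this identity is, after splitting off the $\partial_t\phi$--term and a standard time--continuity argument, equivalent to the distributional statement
\[
-\int_0^\infty\!\!\int_{\R^n}\varphi\,\partial_t\eta\ dx\,dt=\int_0^\infty\!\!\int_{\partial^*E_t}\eta(x,t)\,\bigl(h(x,t)\cdot\nu_{E_t}(x)\bigr)\ d\mathcal{H}^{n-1}(x)\,dt\qquad(\eta\in C^1_c(\R^n\times(0,\infty))),
\]
i.e.\ ``$\partial_t\varphi=-v\cdot\nabla\varphi$ with $v=h$''. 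My plan is: (a) produce a generalized velocity $v$ of $\{\mu_t\}$ in the sense of Definition \ref{L2def} and show $v=h$; (b) show that $\varphi\in BV_{loc}(\R^n\times\R^+)$, that $|(\nabla,\partial_t)\varphi|\ll d\mu_t\,dt$, and identify $\partial_t\varphi$; (c) reassemble \eqref{theareachange} by the coarea/slicing formula for $BV$ functions.

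For (a), set $\tilde\mu^\varepsilon_t:=\sigma^{-1}\varepsilon|\nabla\varphi^\varepsilon(\cdot,t)|^2\,d\mathcal{L}^n$ and $v^\varepsilon:=-\partial_t\varphi^\varepsilon\,\nabla\varphi^\varepsilon/|\nabla\varphi^\varepsilon|^2$ on $\{\nabla\varphi^\varepsilon\neq0\}$, $v^\varepsilon:=0$ elsewhere. The energy dissipation identity $\tfrac{d}{dt}\mu^\varepsilon_t(\R^n)=-\sigma^{-1}\int_{\R^n}\varepsilon(\partial_t\varphi^\varepsilon)^2\,d\mathcal{L}^n$ together with \eqref{apini} gives $\int_0^\infty\!\int_{\R^n}\varepsilon(\partial_t\varphi^\varepsilon)^2\,dx\,dt\le C$, hence $\int_0^\infty\!\int_{\R^n}|v^\varepsilon|^2\,d\tilde\mu^\varepsilon_t\,dt\le C$; the vanishing of the discrepancy (Lemma \ref{discripancy}) gives $\tilde\mu^\varepsilon_t\,dt\rightharpoonup\mu:=\mu_t\,dt$. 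Hutchinson's compactness theorem for measure--function pairs \cite{hutchinson1986second} then yields, along a subsequence, $v\in L^2(\mu;\R^n)$ with $v^\varepsilon\,\tilde\mu^\varepsilon_t\,dt\rightharpoonup v\,\mu$. To identify $v$, I would multiply \eqref{AC0} by $\varepsilon(\nabla\varphi^\varepsilon\cdot X)$ for $X\in C^1_c(\R^n\times(0,\infty);\R^n)$ and integrate by parts in $x$, using $\varepsilon\Delta\varphi^\varepsilon-W'(\varphi^\varepsilon)/\varepsilon=\varepsilon\,\partial_t\varphi^\varepsilon$; with $\xi^\varepsilon:=\nabla\varphi^\varepsilon/|\nabla\varphi^\varepsilon|$ this yields
\[
\int_0^\infty\!\!\int_{\R^n}\mathrm{div}_{(\xi^\varepsilon)^\perp}X\ d\tilde\mu^\varepsilon_t\,dt=-\int_0^\infty\!\!\int_{\R^n}v^\varepsilon\cdot X\ d\tilde\mu^\varepsilon_t\,dt+\frac1\sigma\int_0^\infty\!\!\int_{\R^n}\Bigl(\tfrac{\varepsilon|\nabla\varphi^\varepsilon|^2}{2}-\tfrac{W(\varphi^\varepsilon)}{\varepsilon}\Bigr)\mathrm{div}\,X\ dx\,dt .
\]
Letting $\varepsilon\to0$: the left side tends to $\int_0^\infty\delta\mu_t(X(\cdot,t))\,dt=-\int\!\!\int X\cdot h\ d\mu$ (this is the convergence of the diffuse first variations that underlies the Brakke--flow assertion in Theorem \ref{Brakketrans}), the first term on the right tends to $-\int\!\!\int X\cdot v\ d\mu$ by the measure--function--pair convergence, and the discrepancy term vanishes; hence $v=h$ $\mu$-a.e., and in particular $h(x,t)\perp T_x\mu_t$.

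For (b), let $\Phi(s):=\int_0^s\sqrt{2W(r)}\,dr$, so $\Phi(1)-\Phi(-1)=\sigma$. Young's inequality bounds $|\nabla(\Phi\circ\varphi^\varepsilon)|=\sqrt{2W(\varphi^\varepsilon)}\,|\nabla\varphi^\varepsilon|$ by the Allen--Cahn energy density, while $\int\!\!\int|\partial_t(\Phi\circ\varphi^\varepsilon)|\le\bigl(\int\!\!\int 2W(\varphi^\varepsilon)/\varepsilon\bigr)^{1/2}\bigl(\int\!\!\int\varepsilon(\partial_t\varphi^\varepsilon)^2\bigr)^{1/2}$, so $\{\Phi\circ\varphi^\varepsilon\}$ is bounded in $BV(\R^n\times(0,T))$ for every $T$; since $\sigma^{-1}\Phi\circ\varphi^\varepsilon\to\chi_E+\mathrm{const}$ in $L^1_{loc}$ with $E:=\{(x,t):x\in E_t\}$, this gives $\varphi\in BV_{loc}(\R^n\times\R^+)$. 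Put $\nu^\varepsilon_\Phi:=\sigma^{-1}\sqrt{2W(\varphi^\varepsilon)}|\nabla\varphi^\varepsilon|\,d\mathcal{L}^n dt$; the discrepancy estimate gives $\nu^\varepsilon_\Phi\rightharpoonup\mu$, and by construction $\sigma^{-1}\nabla(\Phi\circ\varphi^\varepsilon)\,d\mathcal{L}^n dt=\xi^\varepsilon\,\nu^\varepsilon_\Phi\rightharpoonup\nabla_x\varphi$ and $\sigma^{-1}\partial_t(\Phi\circ\varphi^\varepsilon)\,d\mathcal{L}^n dt=a^\varepsilon\,\nu^\varepsilon_\Phi\rightharpoonup\partial_t\varphi$ with $a^\varepsilon:=\partial_t\varphi^\varepsilon/|\nabla\varphi^\varepsilon|$. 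Using the local pointwise estimates of \cite[Lemma 3.3]{tonegawa2003integrality} to bound $\int_0^T\!\int_{\R^n}(\partial_t\varphi^\varepsilon)^2\sqrt{2W(\varphi^\varepsilon)}/|\nabla\varphi^\varepsilon|\,dx\,dt=\sigma\int|a^\varepsilon|^2\,d\nu^\varepsilon_\Phi$ uniformly, Hutchinson's theorem applies to $(\nu^\varepsilon_\Phi,a^\varepsilon)$ and (trivially, since $|\xi^\varepsilon|\equiv1$) to $(\nu^\varepsilon_\Phi,\xi^\varepsilon)$: hence $\partial_t\varphi=\bar a\,\mu$ and $\nabla_x\varphi=\bar\xi\,\mu$ with $\bar a,\bar\xi\in L^2(\mu)$, in particular $|(\nabla,\partial_t)\varphi|\ll\mu$. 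Since $\nabla_x\varphi=-\nu_{E_t}\,d\|\nabla\chi_{E_t}\|\,dt$ and $\|\nabla\chi_{E_t}\|\le\mu_t$, we have $\bar\xi=-\rho_t\nu_{E_t}$ with $\rho_t:=d\|\nabla\chi_{E_t}\|/d\mu_t\in[0,1]$, and $\bar\xi\perp T_x\mu_t$ a.e. Finally, because $v^\varepsilon=-a^\varepsilon\xi^\varepsilon$ is parallel to $\nabla\varphi^\varepsilon$, one passes to the limit in the pointwise relation $a^\varepsilon=-v^\varepsilon\cdot\xi^\varepsilon$ — combining the two measure--function--pair limits with the perpendicularity of $h$ and of $\bar\xi$ to $T_x\mu_t$ — to obtain $\bar a=-h\cdot\bar\xi=\rho_t\,(h\cdot\nu_{E_t})$ $\mu$-a.e.

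Putting (a) and (b) together, $\langle\partial_t\varphi,\eta\rangle=\int\!\!\int\eta\,\bar a\ d\mu=\int_0^\infty\!\int_{\partial^*E_t}\eta\,(h\cdot\nu_{E_t})\ d\mathcal{H}^{n-1}dt$ for all $\eta\in C^1_c(\R^n\times(0,\infty))$ (using $\|\nabla\chi_{E_t}\|=\mathcal{H}^{n-1}\llcorner_{\partial^*E_t}$), and the coarea/slicing formula for $BV$ functions (see e.g.\ \cite[Theorem 13.4]{maggi2012sets}) lets one match the time slices of $\varphi$ with $\chi_{E_t}$ and integrate by parts in $t$ against a general $\phi\in C^1_c(\R^n\times\R^+)$ with the correct traces, recovering \eqref{theareachange} for all $0\le t_1<t_2<\infty$ (the endpoint $t_1=0$ via the normalization \eqref{apini}). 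The step I expect to be the genuine obstacle is the last point of (b): the limit in $a^\varepsilon=-v^\varepsilon\cdot\xi^\varepsilon$. This is a product of two sequences that converge only weakly as measure--function pairs (and with respect to the two different natural base measures $\tilde\mu^\varepsilon_t$ and $\nu^\varepsilon_\Phi$), so it does not pass to the limit formally and must be forced by the rigidity of the problem — the perpendicularity of $v^\varepsilon$ to the diffuse level sets, the convergence of the associated unit--normal measure--function pairs coming from Tonegawa's integrality theorem \cite{tonegawa2003integrality}, and the vanishing of the discrepancy; already the uniform $L^2$ bound for $a^\varepsilon$ with respect to $\nu^\varepsilon_\Phi$ that the compactness requires is a nontrivial consequence of the pointwise estimates of \cite{tonegawa2003integrality}.
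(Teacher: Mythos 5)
Your reduction to the distributional identity ``$\partial_t\varphi=(h\cdot\nu_{E_t})\,\mathcal H^{n-1}\llcorner_{\partial^*E_t}\,dt$'' and your parts (a), (b) up to the compactness step are in the right spirit, but the proposal has a genuine gap exactly where you flag it, and it is not a removable technicality: the identification $\bar a=-h\cdot\bar\xi$ is a limit of the product $a^\varepsilon=-v^\varepsilon\cdot\xi^\varepsilon$ of two sequences that converge only weakly (as measure--function pairs, and against two different base measures), and no argument is offered that forces it. The ``rigidity'' you invoke (perpendicularity of $h$ and of $\bar\xi$ to $T_x\mu_t$) constrains directions but not magnitudes: with possible higher multiplicity ($\mu_t\neq\|\nabla\chi_{E_t}\|$) there is no Reshetnyak-type strong convergence of normals/velocities available, which is precisely why the conditional BV-flow results (Laux--Simon et al.) need the energy-convergence hypothesis that this paper is designed to avoid. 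The paper's proof never identifies $\partial_t\varphi$ through limits of the $\varepsilon$-quantities at all. It only extracts from the measure--function-pair compactness the \emph{absolute continuity} $|\nabla'\chi_E|\ll\mu$ (Proposition \ref{varphi<<mu}), and then works entirely on the limit: the density upper bound (Lemma \ref{denslemma}) gives $\mu\ll\mathcal H^n$, hence $\mu\llcorner_{\partial^*E}$ and $|\nabla'\chi_E|$ are mutually absolutely continuous, so $T_{(x,t)}\mu=T_{(x,t)}(\partial^*E)$; the $L^2$-flow property of Brakke flows (Propositions \ref{BtoL2}, \ref{L2cor}) gives $(h,1)\in T_{(x,t)}\mu$; slicing of sets of finite perimeter (Lemma \ref{Slice}) then pins down $\nu_E=(1+|h|^2)^{-1/2}(\nu_{E_t},-h\cdot\nu_{E_t})$, and the coarea formula yields \eqref{theareachange}. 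Your desired identity for $\bar a$ comes out only \emph{a posteriori} from this geometric argument; it cannot be taken as an input.

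A secondary, technical problem: the uniform bound you need for Hutchinson compactness of the pair $(\nu^\varepsilon_\Phi,a^\varepsilon)$, namely $\int (\partial_t\varphi^\varepsilon)^2\sqrt{2W(\varphi^\varepsilon)}/|\nabla\varphi^\varepsilon|\,dx\,dt\le C$, does not follow from the dissipation estimate together with \cite[Lemma 3.3]{tonegawa2003integrality}: that lemma controls $\varepsilon|\nabla\varphi^\varepsilon|^2$ from above by $W(\varphi^\varepsilon)/\varepsilon$ (plus errors), whereas your integrand needs the reverse inequality $\sqrt{2W(\varphi^\varepsilon)}\lesssim\varepsilon|\nabla\varphi^\varepsilon|$, which is false pointwise (the integrand is uncontrolled on and near $\{\nabla\varphi^\varepsilon=0\}$). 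This part is fixable by doing what Proposition \ref{varphi<<mu} does: pair the function $\sqrt{1+(\partial_t\varphi^\varepsilon/|\nabla\varphi^\varepsilon|)^2}$ with the base measure $\varepsilon|\nabla\varphi^\varepsilon|^2\,d\mathcal L^n dt$ on $\{|\nabla\varphi^\varepsilon|>0\}$ (so the $L^2$ bound is exactly $\int\varepsilon|\nabla\varphi^\varepsilon|^2+\varepsilon(\partial_t\varphi^\varepsilon)^2$), and treat $\{\nabla\varphi^\varepsilon=0\}$ separately by Cauchy--Schwarz and the vanishing discrepancy -- but note that this route only delivers $|\nabla'\chi_E|\ll\mu$, not the identification of the velocity density, so it leads you back to the paper's geometric argument rather than to a proof of your step (b).
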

        
	\section{Proof of Theorem \ref{mainresult1}}\label{proofofmainresult1}
	The pair $ ( \{ \mu_t \}_{ t \in \R^+ } , \{ E_t \}_{ t \in \R^+ } ) $ in Theorem \ref{Brakketrans} satisfies (i) and (ii) of the Definition \ref{generalizedBVflow} due to Theorem \ref{Brakketrans}. Therefore, the goal of this section is to prove the formula (\ref{theareachange}). In the following, let $ E \subset \R^n \times \R^+ $ denote the set
    \begin{equation}\label{defE}
        E := \{ ( x , t ) \mid x \in E_t , t \geq 0 \}
    \end{equation}
    and note that $ ( 1 + \varphi^{ \varepsilon } ) / 2 \rightarrow \chi_E $ locally in $ L^1 ( \R^n \times \R^+ ) $ by the dominated convergence theorem and Theorem \ref{Brakketrans}(b).
 
 \subsection{Absolute continuity of phase boundary measure}
        Even if a family of perimeter measures $ \{ | \nabla \chi_{ E_t } | \}_{ t \geq 0 } $ is a Brakke flow, the pair $ ( \{ | \nabla \chi_{ E_t } | \}_{ t \geq 0 } , \{ E_t \}_{ t \geq 0 } ) $ may not be a generalized BV flow. For example, define
	\[
		E_t = \begin{cases}
			\,\left\{ x \in \R^n \mid | x |^2 \leq 1 - 2 ( n - 1 ) t \right\} &\left( 0 \leq t < \frac{ 1 }{ 4 ( n - 1 ) } \right), \\
			\,\emptyset &\left( \frac{ 1 }{ 4 ( n - 1 ) } \leq t \right),
		\end{cases}
	\]
	this is a simple counterexample, that is, the formula (\ref{theareachange}) fails at $ t = 1 / ( 4 ( n - 1 ) ) $. We can expect such a phenomenon where the formula (\ref{theareachange}) dose not hold to occur due to a discontinuity to time direction in the measure-theoretic sense. In this subsection, in order to ensure that such examples do not occur, we prove $ | \nabla^{ \prime } \chi_E | \ll \mu $.
    \vskip.5\baselineskip
    First, we recall the following upper density bound (\cite[Section 5.1]{ilmanen1993convergence}) which follows from Huisken's monotonicity formula adapted for the Allen--Cahn equation. Note that Ilmanen's result is for a ``well-prepared'' initial data, but the local discrepancy estimate of \cite[Lemma 3.3]{tonegawa2003integrality} allows one to obtain Huisken's monotonicity formula with a small error term on $ [ T , \infty ) $ for any $ T > 0 $ as follows. 
        \begin{lemma}
            \label{denslemma}
            For any $ T > 0 $, there exists $ D = D ( T, \| \nabla \chi_{ E_0 } \| ( \R^n ) , n ) > 0 $ such that
            \[
                \Theta^* ( \mu_t , x ) := \limsup_{ r \to + 0 } \frac{ \mu_t ( B_r ( x ) ) }{ r^{ n - 1 } } \leq D
            \]
            for any $ t \in [ T , \infty ) $.
        \end{lemma}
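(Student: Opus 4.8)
The plan is to derive the bound from an almost-monotonicity formula of Huisken type for the rescaled Allen--Cahn energy, following \cite[Section 5.1]{ilmanen1993convergence} but carrying the discrepancy error on $[T,\infty)$ rather than assuming it has already vanished. Two preliminary observations are needed. First, since (AC) is the $L^2$-gradient flow of the energy $\int_{\R^n}\big(\tfrac{\varepsilon}{2}|\nabla\varphi^\varepsilon|^2+\tfrac{W(\varphi^\varepsilon)}{\varepsilon}\big)\,d\mathcal{L}^n$, the total mass $t\mapsto\mu_t^\varepsilon(\R^n)$ is non-increasing in $t$, so by \eqref{apini} there is $\varepsilon_1>0$ with $\mu_t^\varepsilon(\R^n)\le\|\nabla\chi_{E_0}\|(\R^n)+1=:M_0$ for all $t\ge0$ and all $\varepsilon<\varepsilon_1$. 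Second, writing $\rho_{(y,s)}(x,t):=(4\pi(s-t))^{-(n-1)/2}e^{-|x-y|^2/(4(s-t))}$ for the $(n-1)$-dimensional backward heat kernel and $\xi_\varepsilon:=\tfrac{\varepsilon}{2}|\nabla\varphi^\varepsilon|^2-\tfrac{W(\varphi^\varepsilon)}{\varepsilon}$ for the discrepancy, Ilmanen's computation yields a monotonicity identity whose only possibly non-negative term is governed by $\xi_\varepsilon$; after discarding the good (negative) square term coming from the equation, one has, for $t<s$,
\[
\frac{d}{dt}\int_{\R^n}\rho_{(y,s)}(\cdot,t)\,d\mu_t^\varepsilon \;\le\; \frac{1}{2\sigma(s-t)}\int_{\R^n}\rho_{(y,s)}(\cdot,t)\,\big(\xi_\varepsilon(\cdot,t)\big)_+\,d\mathcal{L}^n .
\]
The non-compact support of $\rho_{(y,s)}$ requires the standard cutoff argument, the cutoff error being absorbed using the Gaussian decay together with the bound $M_0$.

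The next step is to feed in the local discrepancy estimate \cite[Lemma 3.3]{tonegawa2003integrality}: for every fixed $T>0$ there are $\varepsilon_0(T)\in(0,\varepsilon_1)$ and $C(T)>0$ with $\big(\xi_\varepsilon(x,t)\big)_+\le C(T)$ for all $x\in\R^n$, $t\ge T$ and $\varepsilon<\varepsilon_0(T)$ (one can in fact replace $C(T)$ by a quantity tending to $0$ as $\varepsilon\to0$, but a finite $T$-dependent constant is all that is used here). Since $\int_{\R^n}\rho_{(y,s)}(\cdot,t)\,d\mathcal{L}^n=\sqrt{4\pi(s-t)}$, integrating the displayed inequality from $t_1$ to $t_2$ with $T\le t_1<t_2<s$ and $\varepsilon<\varepsilon_0(T)$ gives the approximate monotonicity
\[
\int_{\R^n}\rho_{(y,s)}(\cdot,t_2)\,d\mu_{t_2}^\varepsilon \;\le\; \int_{\R^n}\rho_{(y,s)}(\cdot,t_1)\,d\mu_{t_1}^\varepsilon \;+\; \frac{\sqrt{4\pi}\,C(T)}{\sigma}\,\sqrt{s-t_1}\,.
\]

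To extract the density bound I would fix $t_0\ge T$, $y\in\R^n$ and $r\in(0,1]$, choose $s:=t_0+r^2$ and $t_1:=\max\{T/2,\,t_0-1\}$, and note that then $0<T/2\le t_1<t_0<s$, $\min\{T/2,1\}\le s-t_1\le 2$ and $s-t_0=r^2$. Since $\rho_{(y,s)}(\cdot,t_0)\ge c_n\,r^{-(n-1)}$ on $B_r(y)$ with $c_n:=(4\pi)^{-(n-1)/2}e^{-1/4}$, applying the approximate monotonicity from $t_1$ to $t_0$ together with the crude bounds $\rho_{(y,s)}(\cdot,t_1)\le(4\pi\min\{T/2,1\})^{-(n-1)/2}$ and $\mu_{t_1}^\varepsilon(\R^n)\le M_0$ yields, after a routine estimate,
\[
\frac{\mu_{t_0}^\varepsilon(B_r(y))}{r^{n-1}}\;\le\;\frac{1}{c_n}\Big((4\pi\min\{T/2,1\})^{-(n-1)/2}M_0+\tfrac{\sqrt{8\pi}\,C(T)}{\sigma}\Big)\;=:\;D\big(T,\|\nabla\chi_{E_0}\|(\R^n),n\big),
\]
uniformly in $r\in(0,1]$, $\varepsilon<\varepsilon_0(T)$, $t_0\ge T$ and $y\in\R^n$. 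Finally, from $\mu_{t_0}^\varepsilon\rightharpoonup\mu_{t_0}$ and the lower semicontinuity of mass on open sets, comparison with slightly larger concentric balls gives $\mu_{t_0}(B_r(y))\le D\,r^{n-1}$ for all $r\in(0,1]$, all $t_0\ge T$ and all $y\in\R^n$, whence $\Theta^*(\mu_{t_0},y)\le D$, which is the assertion.

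The only genuinely delicate point is the first one: establishing the almost-monotonicity formula, that is, turning the local discrepancy estimate of \cite{tonegawa2003integrality} into an $\varepsilon$-uniform, $T$-dependent control of the error term and carrying out the Gaussian cutoff carefully. The rest --- the scaling choice $s=t_0+r^2$, the energy bound, and the passage to the limit --- is routine.
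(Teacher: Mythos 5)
Your argument is exactly the route the paper intends: the paper itself only cites \cite[Section 5.1]{ilmanen1993convergence} together with the local discrepancy estimate of \cite[Lemma 3.3]{tonegawa2003integrality} to get Huisken's monotonicity formula with a small error term on $[T,\infty)$, and your proposal fleshes out precisely that almost-monotonicity argument (with the standard choice $s=t_0+r^2$, the mass bound from the gradient-flow structure, and the passage to the limit measure). The details check out, so this is essentially the same proof, written out in full.
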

        Next, we list the following energy bound:
	\begin{lemma}
        \[
            \sup_{ t \in [ 0 , T ] } \mu_t^{ \varepsilon } ( \R^n ) + \frac{ 1 }{ \sigma } \int_0^T \int_{ \R^n } \varepsilon\,( \partial_t \varphi^{ \varepsilon } )^2\ d \mathcal{ L }^n d t \leq \mu_0^{ \varepsilon } ( \R^n )
        \]
        for any $ 0 < \varepsilon < 1 $ and $ 0 < T < \infty $.
		\label{apriori}
	\end{lemma}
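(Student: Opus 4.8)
The plan is to derive the standard energy-dissipation identity for the Allen--Cahn flow and then read the two asserted bounds off of it. Since $\varphi_0^\varepsilon$ is $C^\infty$ with $-1\le\varphi_0^\varepsilon\le 1$, the parabolic maximum principle (using that $\pm1$ are stationary solutions) together with interior parabolic regularity gives a unique global smooth solution $\varphi^\varepsilon$ of (AC) with $-1\le\varphi^\varepsilon\le 1$. Moreover, since the constructed $\varphi_0^\varepsilon$ coincides with, or converges exponentially fast to, $-1$ outside a bounded set (it is built from a truncated signed distance composed with $\Psi$, and $\mathcal L^n(E_0)<\infty$), a comparison argument against a supersolution of the form $-1+\delta e^{\lambda t}e^{-c|x|}$ shows that $\varphi^\varepsilon(\cdot,t)+1$, and hence by interior estimates $\nabla\varphi^\varepsilon(\cdot,t)$ and $\partial_t\varphi^\varepsilon(\cdot,t)$, decay at spatial infinity uniformly for $t\in[0,T]$; together with $\mu_0^\varepsilon(\R^n)<\infty$, this is what legitimizes the integrations by parts below.

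Writing $F^\varepsilon(t):=\mu_t^\varepsilon(\R^n)=\frac1\sigma\int_{\R^n}\big(\tfrac{\varepsilon}{2}|\nabla\varphi^\varepsilon|^2+\tfrac1\varepsilon W(\varphi^\varepsilon)\big)\,d\mathcal L^n$, I would differentiate under the integral and integrate by parts in $x$,
\[
\frac{d}{dt}F^\varepsilon(t)=\frac1\sigma\int_{\R^n}\Big(\varepsilon\,\nabla\varphi^\varepsilon\cdot\nabla\partial_t\varphi^\varepsilon+\tfrac1\varepsilon W'(\varphi^\varepsilon)\,\partial_t\varphi^\varepsilon\Big)\,d\mathcal L^n=\frac1\sigma\int_{\R^n}\Big({-}\varepsilon\Delta\varphi^\varepsilon+\tfrac1\varepsilon W'(\varphi^\varepsilon)\Big)\partial_t\varphi^\varepsilon\,d\mathcal L^n,
\]
and then use (AC) in the form $\Delta\varphi^\varepsilon-\varepsilon^{-2}W'(\varphi^\varepsilon)=\partial_t\varphi^\varepsilon$ to rewrite the last integrand as $-\varepsilon(\partial_t\varphi^\varepsilon)^2$, which yields $\frac{d}{dt}F^\varepsilon(t)=-\frac1\sigma\int_{\R^n}\varepsilon(\partial_t\varphi^\varepsilon)^2\,d\mathcal L^n\le 0$. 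Integrating over $[0,T]$ gives the sharp identity $\mu_T^\varepsilon(\R^n)+\frac1\sigma\int_0^T\int_{\R^n}\varepsilon(\partial_t\varphi^\varepsilon)^2\,d\mathcal L^n\,dt=\mu_0^\varepsilon(\R^n)$; since $F^\varepsilon$ is non-increasing, $\sup_{t\in[0,T]}\mu_t^\varepsilon(\R^n)=\mu_0^\varepsilon(\R^n)$ and the dissipation integral is $\le\mu_0^\varepsilon(\R^n)$, which together is the assertion of the lemma.

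The only nontrivial point is justifying these manipulations on the unbounded domain $\R^n$, and I expect this to be the main obstacle: the energy bound being proved cannot be used as an a priori input, so a naive spatial cutoff $\zeta_R$ produces an error term of size $\sim R^{-2}\int_0^T\int_{B_{2R}\setminus B_R}\varepsilon|\nabla\varphi^\varepsilon|^2$ that one cannot discard without extra information. I would therefore use the spatial decay of $\varphi^\varepsilon$ and its first derivatives established in the first step to send the boundary and cutoff terms to zero as $R\to\infty$; alternatively, one may simply invoke that this energy monotonicity is classical for the Allen--Cahn equation and is already contained in \cite{ilmanen1993convergence,tonegawa2003integrality}.
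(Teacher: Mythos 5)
The paper gives no proof of Lemma \ref{apriori} at all---it is listed as the classical energy-dissipation estimate for (AC) and implicitly quoted from \cite{ilmanen1993convergence,tonegawa2003integrality}---so your reconstruction via $\frac{d}{dt}\mu_t^{\varepsilon}(\R^n)=-\frac{1}{\sigma}\int_{\R^n}\varepsilon(\partial_t\varphi^{\varepsilon})^2\,d\mathcal L^n$ is exactly the standard argument being invoked, and the computation (differentiate the energy, integrate by parts, substitute the equation) is correct. Your instinct that the only genuine issue is justifying the integration by parts on the unbounded domain is also right, and deferring to the well-preparedness of the initial data or to the cited references is acceptable; note only that the particular barrier $-1+\delta e^{\lambda t}e^{-c|x|}$ cannot lie above the initial datum, which is close to $+1$ inside $E_0$, so it must be modified (e.g.\ compare $1+\varphi^{\varepsilon}$ with $e^{Lt/\varepsilon^2}$ times a heat-kernel convolution of $1+\varphi_0^{\varepsilon}$, using $|W'(s)-W'(-1)|\le L|s+1|$ on $[-1,1]$, or take a barrier exceeding $1$ on a large ball), and one should also check the data are prepared so that $\mu_0^{\varepsilon}(\R^n)<\infty$ to begin with.

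The one genuine misstep is the final assembly. From $\sup_{t\in[0,T]}\mu_t^{\varepsilon}(\R^n)=\mu_0^{\varepsilon}(\R^n)$ and $\frac{1}{\sigma}\int_0^T\int_{\R^n}\varepsilon(\partial_t\varphi^{\varepsilon})^2\,d\mathcal L^n dt\le\mu_0^{\varepsilon}(\R^n)$ you only conclude that the sum is at most $2\mu_0^{\varepsilon}(\R^n)$, not the displayed inequality; so ``which together is the assertion of the lemma'' is a non sequitur. In fact the inequality read literally cannot follow from your (correct) identity: since $t=0$ is included in the supremum, the left-hand side is at least $\mu_0^{\varepsilon}(\R^n)$ plus the dissipation, so the stated bound would force $\partial_t\varphi^{\varepsilon}\equiv 0$ on $[0,T]$. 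The statement is evidently shorthand for the energy inequality $\mu_t^{\varepsilon}(\R^n)+\frac{1}{\sigma}\int_0^t\int_{\R^n}\varepsilon(\partial_s\varphi^{\varepsilon})^2\,d\mathcal L^n ds\le\mu_0^{\varepsilon}(\R^n)$ for every $t\in[0,T]$ (equivalently, the two bounds separately), which is precisely what your identity delivers and all that is used later---e.g.\ in the proof of Proposition \ref{varphi<<mu}, where the gradient term and the dissipation term are estimated separately to produce the factor $\sigma(2T+1)\mu_0^{\varepsilon}(\R^n)$. State that version as your conclusion rather than claiming the literal sum bound.
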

        We also quote from \cite{ilmanen1993convergence,tonegawa2003integrality} the vanishing of discrepancy measure:
	\begin{lemma}
		Under the same assumptions of Theorem \ref{Brakketrans}, for any $T>0$, we have
		\begin{equation}
			\int_0^T \int_{ \R^n } \left| \frac{ \varepsilon | \nabla \varphi^{ \varepsilon } |^2 }{ 2 } - \frac{ W ( \varphi^{ \varepsilon } ) }{ \varepsilon } \right| d \mathcal{L}^n  d t=0.
		\end{equation}
		\label{discripancy}
	\end{lemma}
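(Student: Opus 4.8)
Since the paper explicitly says this estimate is quoted from Ilmanen \cite{ilmanen1993convergence} (for well-prepared data) and Tonegawa \cite{tonegawa2003integrality} (in general), the plan is to present it as such, reproducing only the short reduction that makes transparent what is really needed. Write $\xi^\varepsilon := \tfrac12\varepsilon|\nabla\varphi^\varepsilon|^2 - \varepsilon^{-1}W(\varphi^\varepsilon)$ for the discrepancy density and $e^\varepsilon := \tfrac12\varepsilon|\nabla\varphi^\varepsilon|^2 + \varepsilon^{-1}W(\varphi^\varepsilon)$, so that $e^\varepsilon\,d\mathcal{L}^n = \sigma\,d\mu_t^\varepsilon$, and let $\Phi(s):=\int_{-1}^{s}\sqrt{2W(r)}\,dr$, so that $\Phi(\varphi^\varepsilon)\to\sigma\chi_{E_t}$ in $L^1_{loc}$ by Theorem \ref{Brakketrans}(b). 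With $a:=\sqrt\varepsilon\,|\nabla\varphi^\varepsilon|$ and $b:=\sqrt{2W(\varphi^\varepsilon)/\varepsilon}$ one has the elementary pointwise identities $\xi^\varepsilon=\tfrac12(a-b)(a+b)$, $e^\varepsilon=\tfrac12(a^2+b^2)$, $|\nabla_x\Phi(\varphi^\varepsilon)|=ab$, whence $e^\varepsilon-|\nabla_x\Phi(\varphi^\varepsilon)|=\tfrac12(a-b)^2\ge0$ and $(a+b)^2\le 4e^\varepsilon$. Setting $\delta_\varepsilon:=\int_0^T\!\int_{\R^n}\big(e^\varepsilon-|\nabla_x\Phi(\varphi^\varepsilon)|\big)\,d\mathcal{L}^n\,dt$, the Cauchy--Schwarz inequality together with $\int_0^T\!\int_{\R^n}e^\varepsilon\,d\mathcal{L}^n\,dt=\sigma\int_0^T\mu_t^\varepsilon(\R^n)\,dt\le \sigma T\sup_\varepsilon\mu_0^\varepsilon(\R^n)<\infty$ (Lemma \ref{apriori}) gives
\[
\int_0^T\!\int_{\R^n}|\xi^\varepsilon|\,d\mathcal{L}^n\,dt=\tfrac12\int_0^T\!\int_{\R^n}|a-b|\,(a+b)\,d\mathcal{L}^n\,dt\le C(T)\,\delta_\varepsilon^{1/2}.
\]
So the whole claim reduces to proving $\delta_\varepsilon\to0$, i.e. that $\tfrac1\sigma|\nabla_x\Phi(\varphi^\varepsilon)|\,d\mathcal{L}^n\,dt$ and $\mu_t^\varepsilon\,dt$ share the limit $\mu_t\,dt$ on $\R^n\times[0,T]$.

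For the reduced statement: since $|\nabla_x\Phi(\varphi^\varepsilon)|\le e^\varepsilon$ and $\delta_\varepsilon\ge0$, lower semicontinuity of the total variation and Theorem \ref{Brakketrans} give $\|\nabla\chi_{E_t}\|(\R^n)\le\liminf_\varepsilon\tfrac1\sigma\!\int_{\R^n}|\nabla_x\Phi(\varphi^\varepsilon(\cdot,t))|\,d\mathcal{L}^n$, while $\limsup_\varepsilon\tfrac1\sigma\!\int_{\R^n}|\nabla_x\Phi(\varphi^\varepsilon(\cdot,t))|\,d\mathcal{L}^n\le\mu_t(\R^n)$ for a.e.\ $t$ by the tightness of the energy measures. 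Hence in the (generic) case of no energy loss, $\mu_t=\|\nabla\chi_{E_t}\|$ for a.e.\ $t$, one gets $\delta_\varepsilon\to0$ at once by dominated convergence, the integrand being bounded by $\sigma\mu_0^\varepsilon(\R^n)\le C$. In general, the substantive point — which I would simply quote from \cite{ilmanen1993convergence,tonegawa2003integrality} — is that for the flow the concentrated energy sits, in the limit, only on optimal transition layers, so that still $\tfrac1\sigma|\nabla_x\Phi(\varphi^\varepsilon)|\,d\mathcal{L}^n\,dt\rightharpoonup\mu_t\,dt$. The mechanism is: (i) differentiating the Allen--Cahn equation (AC) one obtains the identity $(\partial_t-\Delta)\xi^\varepsilon=\varepsilon^{-3}(W'(\varphi^\varepsilon))^2-\varepsilon|\nabla^2\varphi^\varepsilon|^2$; (ii) this, fed through a local parabolic comparison argument together with Huisken's monotonicity formula adapted to (AC) and the density upper bound of Lemma \ref{denslemma}, yields the local pointwise discrepancy estimate of \cite[Lemma 3.3]{tonegawa2003integrality}, namely $\xi^\varepsilon(x,t)\le\omega_{R,\tau}(\varepsilon)\,e^\varepsilon(x,t)$ on $B_R(0)\times[\tau,T]$ with $\omega_{R,\tau}(\varepsilon)\to0$, crucially using no sign condition on $\xi^\varepsilon(\cdot,0)$; (iii) one integrates, absorbing the slab $\{t\le\tau\}$ via $\int_0^\tau\sigma\mu_t^\varepsilon(\R^n)\,dt\le\sigma\tau\,\mu_0^\varepsilon(\R^n)$ (Lemma \ref{apriori}) and the exterior $\{|x|>R\}$ via the tightness of $\{\mu_t^\varepsilon\}_{0\le t\le T}$ (a consequence of the finiteness of the initial energy and the localized monotonicity formula), obtaining $\int_0^T\!\int_{\R^n}(\xi^\varepsilon)_+\to0$, and the same scheme applied to the nonnegative defect $e^\varepsilon-|\nabla_x\Phi(\varphi^\varepsilon)|$ gives $\delta_\varepsilon\to0$.

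I expect the only genuine obstacle to be step (ii), the local pointwise discrepancy bound in the absence of a sign condition on $\xi^\varepsilon(\cdot,0)$; this is the technical heart of \cite{tonegawa2003integrality} and I would not reprove it. I note, however, that for the explicit initial data $\varphi_0^{\varepsilon}=\Psi(\tilde d/\varepsilon)$ built above one has $\xi^\varepsilon(\cdot,0)=\varepsilon^{-1}W(\Psi(\tilde d/\varepsilon))\big(|\nabla\tilde d|^2-1\big)\le0$ because $|\nabla\tilde d|\le1$, so the flow is well-prepared and Ilmanen's original argument \cite{ilmanen1993convergence} applies directly; invoking \cite[Lemma 3.3]{tonegawa2003integrality} is needed only in order to rely on nothing beyond the properties \eqref{apini}. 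Accordingly, I would keep the proof of this lemma to the reduction to $\delta_\varepsilon\to0$ displayed above, with a reference to \cite{ilmanen1993convergence,tonegawa2003integrality} for the remaining step.
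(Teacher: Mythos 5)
Your proposal is correct and, at bottom, does what the paper does: the paper states this lemma without proof, quoting it from Ilmanen and Tonegawa (with the remark that Tonegawa's local pointwise discrepancy estimate removes the need for well-prepared data), and your argument likewise defers the substantive step to those references, since your reduction to $\delta_\varepsilon\to 0$ is (via the same Cauchy--Schwarz bound and the energy bound of Lemma \ref{apriori}) equivalent to the vanishing of the discrepancy itself rather than a genuinely weaker statement. The added remarks (the elementary identities for $a,b$, and the observation that the constructed initial data $\Psi(\tilde d/\varepsilon)$ have nonpositive initial discrepancy since $|\nabla\tilde d|\le 1$) are accurate and consistent with the paper's discussion.
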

	Finally, we prove the absolute continuity. Mugnai and R\"{o}ger proved the following proposition for the Allen--Cahn functional, and the following proof is almost identical to \cite[Proposition 8.4]{roger2008allen}.
	\begin{prop}
		\label{varphi<<mu}
		For the Radon measures $ \{ \mu_t \}_{ t \in \R^+ } $ and $ \{ E_t \}_{ t \in \R^+ } $ as in Theorem \ref{Brakketrans}, let $ d \mu = d \mu_t dt $ and let $E$ be as in \eqref{defE}. Then we have
		\begin{equation}
			| \nabla^{ \prime } \chi_E | \ll \mu
		\end{equation}
  and recall that $ \nabla^{ \prime } $ is the gradient with respect to the time and space variables. 
	\end{prop}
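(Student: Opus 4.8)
\emph{Proof strategy (following \cite[Proposition~8.4]{roger2008allen}).} The plan is to compose $\varphi^\varepsilon$ with a primitive of $\sqrt{2W}$, so that the space-time gradient becomes a sequence of measures of uniformly locally bounded mass, pass to the limit, and then dominate the limit by $\mu$ via Young's and the Cauchy--Schwarz inequalities. Set $\Phi(s):=\int_{-1}^{s}\sqrt{2W(\tau)}\,d\tau$, so that $\Phi(-1)=0$, $\Phi(1)=\sigma$, and $\Phi'=\sqrt{2W}$. Since $-1\le\varphi^\varepsilon\le1$ by the maximum principle, $\Phi$ is bounded on the range of $\varphi^\varepsilon$; as $(1+\varphi^\varepsilon)/2\to\chi_E$ in $L^1_{loc}(\R^n\times\R^+)$ (noted after \eqref{defE}), dominated convergence upgrades this to $\Phi(\varphi^\varepsilon)\to\sigma\chi_E$ in $L^1_{loc}(\R^n\times\R^+)$, whence $\sigma\nabla'\chi_E=\lim_\varepsilon\nabla'\bigl(\Phi(\varphi^\varepsilon)\bigr)$ in the distributional sense. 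Moreover $\mu^\varepsilon_t\rightharpoonup\mu_t$ for every $t$ (Theorem~\ref{Brakketrans}(a)) together with the mass bound of Lemma~\ref{apriori} and dominated convergence give $d\mu^\varepsilon:=d\mu^\varepsilon_t\,dt\rightharpoonup d\mu$ as Radon measures on $\R^n\times\R^+$.

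Next I would prove a pointwise estimate for $|\nabla'\Phi(\varphi^\varepsilon)|$. By the chain rule $\nabla'\Phi(\varphi^\varepsilon)=\sqrt{2W(\varphi^\varepsilon)}\,(\nabla\varphi^\varepsilon,\partial_t\varphi^\varepsilon)$, hence $|\nabla'\Phi(\varphi^\varepsilon)|\le\sqrt{2W(\varphi^\varepsilon)}\,|\nabla\varphi^\varepsilon|+\sqrt{2W(\varphi^\varepsilon)}\,|\partial_t\varphi^\varepsilon|$. For the spatial term, Young's inequality in the form $\sqrt{2W(\varphi^\varepsilon)}\,|\nabla\varphi^\varepsilon|=\bigl(\sqrt{\varepsilon}\,|\nabla\varphi^\varepsilon|\bigr)\bigl(\sqrt{2W(\varphi^\varepsilon)/\varepsilon}\bigr)\le\frac{\varepsilon|\nabla\varphi^\varepsilon|^2}{2}+\frac{W(\varphi^\varepsilon)}{\varepsilon}$ yields $\sqrt{2W(\varphi^\varepsilon)}\,|\nabla\varphi^\varepsilon|\,d\mathcal{L}^n\le\sigma\,d\mu^\varepsilon_t$. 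For the temporal term, the same splitting $\sqrt{2W(\varphi^\varepsilon)}=\sqrt{2W(\varphi^\varepsilon)/\varepsilon}\cdot\sqrt{\varepsilon}$ together with Cauchy--Schwarz gives, for any $0\le\phi\in C^1_c(\R^n\times\R^+)$,
\begin{multline*}
\iint_{\R^n\times\R^+}\!\!\phi\,\sqrt{2W(\varphi^\varepsilon)}\,|\partial_t\varphi^\varepsilon|\,d\mathcal{L}^n dt\\
\le\Bigl(\iint_{\R^n\times\R^+}\!\!\phi\,\tfrac{2W(\varphi^\varepsilon)}{\varepsilon}\,d\mathcal{L}^n dt\Bigr)^{1/2}\Bigl(\iint_{\R^n\times\R^+}\!\!\phi\,\varepsilon(\partial_t\varphi^\varepsilon)^2\,d\mathcal{L}^n dt\Bigr)^{1/2}\le\bigl(2\sigma\,\mu^\varepsilon(\phi)\bigr)^{1/2}\bigl(\sigma\,C_0\,\|\phi\|_{C^0}\bigr)^{1/2},
\end{multline*}
where I have used $W(\varphi^\varepsilon)/\varepsilon\,d\mathcal{L}^n\le\sigma\,d\mu^\varepsilon_t$, the dissipation bound of Lemma~\ref{apriori}, and $C_0:=\sup_\varepsilon\mu^\varepsilon_0(\R^n)<\infty$ (finite by \eqref{apini}). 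Note that the discrepancy vanishing of Lemma~\ref{discripancy} plays no role here.

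Then I would pass to the limit. The estimates above show that $|\nabla'\Phi(\varphi^\varepsilon)|\,\mathcal{L}^n dt$ have uniformly locally bounded mass, so by lower semicontinuity of the total variation along $L^1_{loc}$ convergence, $\chi_E\in BV_{loc}(\R^n\times\R^+)$ and, for $0\le\phi\in C^1_c(\R^n\times\R^+)$,
\[
\sigma\,|\nabla'\chi_E|(\phi)\le\liminf_{\varepsilon\to0}\iint_{\R^n\times\R^+}\!\!\phi\,|\nabla'\Phi(\varphi^\varepsilon)|\,d\mathcal{L}^n dt\le\sigma\,\mu(\phi)+\bigl(2\sigma\,\mu(\phi)\bigr)^{1/2}\bigl(\sigma\,C_0\,\|\phi\|_{C^0}\bigr)^{1/2}.
\]
Approximating $\chi_U$ from below by such test functions with $\|\phi\|_{C^0}\le1$ for bounded open $U$, and then using outer regularity of Radon measures, one obtains $|\nabla'\chi_E|(A)\le\mu(A)+\bigl(2C_0\,\mu(A)\bigr)^{1/2}$ for every bounded Borel set $A$; in particular $\mu(A)=0$ forces $|\nabla'\chi_E|(A)=0$, and exhausting by compact sets removes the boundedness restriction, so $|\nabla'\chi_E|\ll\mu$.

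I expect the temporal term $\sqrt{2W(\varphi^\varepsilon)}\,|\partial_t\varphi^\varepsilon|$ to be the main obstacle. One cannot pass to the limit in $\nabla'\varphi^\varepsilon$ itself, since $|\nabla\varphi^\varepsilon|$ carries mass of order $\varepsilon^{-1}$ across the transition layer; composition with $\Phi$ is precisely what restores compactness. And, unlike the spatial term, $\sqrt{2W(\varphi^\varepsilon)}\,|\partial_t\varphi^\varepsilon|$ is not pointwise dominated by the energy density $\sigma\,\mu^\varepsilon_t$, so the Cauchy--Schwarz splitting $\sqrt{2W/\varepsilon}\cdot\sqrt{\varepsilon}$ is essential: it factors the term into a piece controlled by $\mu$ and a piece controlled by the dissipation bound $\iint\varepsilon(\partial_t\varphi^\varepsilon)^2\,d\mathcal{L}^n dt<\infty$ of Lemma~\ref{apriori}. (Alternatively, the purely spatial part $|\nabla_x\chi_E|\ll\mu$ can be deduced without the Allen--Cahn equation at all, by slicing in $t$ and using $\|\nabla\chi_{E_t}\|\le\mu_t$ for every $t$ from Theorem~\ref{Brakketrans}(b).)
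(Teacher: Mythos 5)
Your proof is correct, but it takes a genuinely different (and more elementary) route than the paper. The paper follows Mugnai--R\"{o}ger's Proposition 8.4: on $\{|\nabla\varphi^\varepsilon|>0\}$ it writes $|\nabla' w(\varphi^\varepsilon)|=\sqrt{1+(\partial_t\varphi^\varepsilon/|\nabla\varphi^\varepsilon|)^2}\,|\nabla w(\varphi^\varepsilon)|$, pairs the tilt factor $\sqrt{1+(\partial_t\varphi^\varepsilon/|\nabla\varphi^\varepsilon|)^2}$ with the measures $\tfrac1\sigma\varepsilon|\nabla\varphi^\varepsilon|^2\,d\mathcal{L}^n dt$, and invokes Hutchinson's compactness for measure--function pairs (Theorem \ref{measurefunctiontheorem}) to produce a limit density $f\ge 0$ with $\sigma|\nabla'\chi_E|\le \sigma f\,\mu$; this forces it to use the vanishing of the discrepancy (Lemma \ref{discripancy}) twice, both to identify the limit of the base measures $\tfrac1\sigma\varepsilon|\nabla\varphi^\varepsilon|^2\,d\mathcal{L}^n dt$ with $\mu$ and to control the replacement of $\sqrt{2W}|\nabla\varphi^\varepsilon|$ by $\varepsilon|\nabla\varphi^\varepsilon|^2$ as well as the contribution of $\{|\nabla\varphi^\varepsilon|=0\}$. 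You instead only seek an upper bound: Young's inequality dominates the spatial part pointwise by $\sigma\,d\mu_t^\varepsilon$, Cauchy--Schwarz against the dissipation of Lemma \ref{apriori} controls the temporal part, and lower semicontinuity plus outer regularity yields $|\nabla'\chi_E|(A)\le\mu(A)+\sqrt{2C_0\mu(A)}$, which indeed gives absolute continuity with no measure--function pairs and no discrepancy lemma -- your observation on that point is accurate. The trade-off is that the paper's argument produces a domination $|\nabla'\chi_E|\le f\,\mu$ with $f\in L^2(\mu)$, a quantitatively stronger statement, whereas you obtain only a concave modulus of continuity; since the Proposition (and its use in Proposition \ref{princi}) requires nothing beyond $|\nabla'\chi_E|\ll\mu$, your weaker conclusion suffices. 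Two small points: your constant $C_0=\sup_\varepsilon\mu_0^\varepsilon(\R^n)$ is not literally guaranteed finite by the weak convergence in \eqref{apini} alone, though it is implicit in the well-prepared data and the paper relies on the same uniform bound; and the lower-semicontinuity step should be phrased on the open set $\R^n\times(0,\infty)$ (test functions in $C^1_c(\R^n\times(0,\infty))$), exactly as the paper does.
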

	
	\begin{proof}
		We define the function by
		\begin{equation}
			w ( r ) := \int_{ - 1 }^{ r } \sqrt{ 2 W ( s ) }\,d s
		\end{equation}
        and recall that $ \sigma = w ( 1 ) $. For a point $ ( x , t ) $ with $ | \nabla \varphi^{ \varepsilon } ( x , t ) | > 0 $, we have
		\begin{equation}
			| \nabla w ( \varphi^{ \varepsilon } ) | = | \nabla \varphi^{ \varepsilon } | \sqrt{ 2 W ( \varphi^{ \varepsilon } ) }\,, \quad | \partial_t w ( \varphi^{ \varepsilon } ) | = | \partial_t \varphi^{ \varepsilon } | \sqrt{ 2 W ( \varphi^{ \varepsilon } ) }
		\end{equation}
		so that
		\begin{equation}
			| \nabla^{ \prime } w ( \varphi^{ \varepsilon } ) | = \sqrt{ 1 + \left( \frac{ \partial_t \varphi^{ \varepsilon } }{ | \nabla \varphi^{ \varepsilon } | } \right)^2 } | \nabla w ( \varphi^{ \varepsilon } ) |. \label{nablaprime}
		\end{equation}
		By Lemma \ref{apriori}, for any $ T > 0 $ we have
		\begin{equation}
            \begin{split}
			&\int_0^T \int_{ \{ | \nabla \varphi^{ \varepsilon } ( \cdot , t ) | > 0 \} } \left( \sqrt{ 1 + \left( \frac{ \partial_t \varphi^{ \varepsilon } }{ | \nabla \varphi^{ \varepsilon } | } \right)^2 } \right)^2 \varepsilon\,| \nabla \varphi^{ \varepsilon } |^2\,d \mathcal{ L }^n d t\\
			&= \int_0^T \int_{ \{ | \nabla \varphi^{ \varepsilon } ( \cdot , t ) | > 0 \} } \varepsilon\,| \nabla \varphi^{ \varepsilon } |^2 + \varepsilon\,( \partial_t \varphi^{ \varepsilon } )^2 \, d \mathcal{ L }^n d t
            \leq \sigma ( 2 T + 1 ) \mu_0^{ \varepsilon } ( \R^n ).
            \end{split}
		\end{equation}
		Due to Lemma \ref{discripancy}, $ \varepsilon\,| \nabla \varphi^{ \varepsilon } |^2\,d\mathcal{L}^n dt$ converges to $ \mu $. Hence, according to the compactness theorem of the measure-function pairs (Theorem \ref{measurefunctiontheorem}), there exists a $ \mu $-measurable function $ f\geq 0 $ such that
		\begin{equation}
			\lim_{ \varepsilon \to +0 } \frac{ 1 }{ \sigma } \int_{ \{ | \nabla \varphi^{ \varepsilon } | > 0 \} } \phi \sqrt{ 1 + \left( \frac{ \partial_t \varphi^{ \varepsilon } }{ | \nabla \varphi^{ \varepsilon } | } \right)^2 } \varepsilon\,| \nabla \varphi^{ \varepsilon } |^2\ d \mathcal{L}^n dt = \int_{ \R^n \times \R^+ } \phi\,f\ d \mu \label{measurefunction}
		\end{equation}
        for all $ \phi \in C_c^0 ( \R^n \times ( 0 , \infty ) ) $. On the other hand, we have
        \[
            \left( \sqrt{ \frac{ 2 W ( \varphi^{ \varepsilon } ) }{ \varepsilon } } - \sqrt{ \varepsilon }\,| \nabla \varphi^{ \varepsilon } | \right)^2 \leq 2\,\left| \frac{ \varepsilon\,| \nabla \varphi^{ \varepsilon } |^2 }{ 2 } - \frac{ W ( \varphi^{ \varepsilon } ) }{ \varepsilon } \right|.
        \]
        Therefore we have
		\begin{equation}
			\begin{split}
				&\left| \int_{ \{ | \nabla \varphi^{ \varepsilon } | > 0 \} } \phi\,| \nabla^{ \prime } w ( \varphi^{ \varepsilon } ) | - \int_{ \{ | \nabla \varphi^{ \varepsilon } | > 0 \} } \phi \sqrt{ 1 + \left( \frac{ \partial_t \varphi^{ \varepsilon } }{ | \nabla \varphi^{ \varepsilon } | } \right)^2 } \varepsilon\,| \nabla \varphi^{ \varepsilon } |^2\,\right|\\
				&=\left| \int_{ \{ | \nabla \varphi^{ \varepsilon } | > 0 \} } \phi \sqrt{ 1 + \left( \frac{ \partial_t \varphi^{ \varepsilon } }{ | \nabla \varphi^{ \varepsilon } | } \right)^2 } \left( \sqrt{ \frac{ 2 W ( \varphi^{ \varepsilon } ) }{ \varepsilon } } - \sqrt{ \varepsilon } | \nabla \varphi^{ \varepsilon } | \right) \sqrt{ \varepsilon } \,| \nabla \varphi^{ \varepsilon } |\,\right|\\
				&\leq \sqrt{ 2 } \left( \int_{ \{ | \nabla \varphi^{ \varepsilon } | > 0 \} } \phi^2 \left( 1 + \left( \frac{ \partial_t \varphi^{ \varepsilon } }{ | \nabla \varphi^{ \varepsilon } | } \right)^2 \right) \varepsilon\,| \nabla \varphi^{ \varepsilon } |^2\,\right)^{ \frac{ 1 }{ 2 } } \left( \int_{ \{ | \nabla \varphi^{ \varepsilon } | > 0 \} } \left| \frac{ \varepsilon | \nabla \varphi^{ \varepsilon } |^2 }{ 2 } - \frac{ W ( \varphi^{ \varepsilon } ) }{ \varepsilon } \right|\,\right)^{ \frac{ 1 }{ 2 } },
			\end{split}
			\label{compute}
		\end{equation}
		where we use the H\"{o}lder inequality. Thanks to (\ref{nablaprime})-(\ref{compute}) and Lemma \ref{discripancy}, we conclude that
		\begin{equation}
			\lim_{ \varepsilon \to +0 } \frac{ 1 }{ \sigma } \int_{ \{ | \nabla \varphi^{ \varepsilon } | > 0 \} } \phi\,| \nabla^{ \prime } w ( \varphi^{ \varepsilon } ) |\,d \mathcal{ L }^n d t = \int_{ \R^n \times \R^+ } \phi\,f\,d \mu. \label{abstconti}
		\end{equation}
		On the other hand, by using the H\"{o}lder inequality, we have
		\begin{equation}
			\begin{split}
				\int_{ \{ | \nabla \varphi^{ \varepsilon } | = 0 \} } | \nabla^{ \prime } w ( \varphi^{ \varepsilon } ) |
				&= \int_{ \{ | \nabla \varphi^{ \varepsilon } | = 0 \} } | \partial_t \varphi^{ \varepsilon } | \sqrt{ 2 W ( \varphi^{ \varepsilon } ) }\\
				&\leq \sqrt{ 2 } \left( \int_{ \R^n \times \R^+ } \varepsilon\,( \partial_t \varphi^{ \varepsilon } )^2 \right)^{ \frac{ 1 }{ 2 } } \left( \int_{ \{ | \nabla \varphi^{ \varepsilon } | = 0 \} } \frac{ W ( \varphi^{ \varepsilon } ) }{ \varepsilon } \right)^{ \frac{ 1 }{ 2 } }\\
				&\leq \sqrt{ 2 } \left( \int_{ \R^n \times \R^+ } \varepsilon\,( \partial_t \varphi^{ \varepsilon } )^2 \right)^{ \frac{ 1 }{ 2 } } \left( \int_{ \R^n \times \R^+ } \left| \frac{ \varepsilon\,| \nabla \varphi^{ \varepsilon } |^2 }{ 2 } -\frac{ W ( \varphi^{ \varepsilon } ) }{ \varepsilon } \right|   \right)^{ \frac{ 1 }{ 2 } }.
			\end{split}
		\end{equation}
		Letting $ \varepsilon \to 0 $, again by Lemma \ref{apriori} and Lemma \ref{discripancy}, we have $ \int_{ B^{ \varepsilon } } | \nabla^{ \prime } w ( \varphi^{ \varepsilon } ) |\,d\mathcal{ L }^n d t \to 0 $. Therefore, in general, the equality (\ref{abstconti}) is valid even if we replace $ A^{ \varepsilon } $ by $ \R^n \times ( 0 , \infty ) $. Thus, by \cite[Proposion 1]{modica1987gradient}, the lower semi-continuity of total variation measure and (\ref{abstconti}), we obtain
		\begin{equation}
			\begin{split}
				&\sigma \int_{ \R^n \times \R^+ } \phi\,d | \nabla^{ \prime } \chi_E |
				=\int_{ \R^n \times \R^+ } \phi\,d | \nabla^{ \prime } w ( \varphi ) |\,d \mathcal{ L }^n d t\\
				&\leq \liminf_{ \varepsilon \to +0 } \int_{ \R^n \times \R^+ } \phi\,| \nabla^{ \prime } w ( \varphi^{ \varepsilon } ) |\,d\mathcal{ L }^n dt = \sigma\int_{ \R^n \times \R^+ } \phi\,f\,d \mu
			\end{split}
		\end{equation}
        for $ \phi \geq 0 $. This completes the proof of $ | \nabla^{ \prime } \chi_E | \ll \mu $.
	\end{proof}

	\subsection{Basic Properties of $ L^2 $ Flow and Set of Finite Perimeter}
	\label{Property}
	In this subsection, we state the following properties of $ L^2 $ flow and set of finite perimeter. The proof of Theorem \ref{mainresult1} will follow from those properties. The arguments in this subsection are mostly contained in \cite{roger2008allen,StuvardTonegawa+2022,tashiro2023existence}
	and we include this for the convenience of the reader.
	\begin{prop}
		Let $ \{ \mu_t \}_{ t \in \R^+ } $ and $ \{ E_t \}_{ t \in \R^+ } $ be as in Theorem \ref{Brakketrans}. We set $ d \mu = d \mu_t d t $ and $E$ as in \eqref{defE}. Then $ \mu \llcorner_{ \partial^* E } $ is an $n$-dimensional rectifiable Radon measure and we have the following for $ \mathcal{ H }^n $-a.e.\,$ ( x , t ) \in \partial^* E \cap \{ t > 0 \} $:
		\begin{description}
			\item[\textup{(1)}] the tangent space $ T_{ ( x , t ) }\, \mu $ exists, and $ T_{ ( x , t ) } \,\mu = T_{ ( x , t ) } ( \partial^* E ) $,
			\item[\textup{(2)}] $ \begin{pmatrix}
				h ( x , t )\\
				1
			\end{pmatrix}
			\in T_{ ( x , t ) }\, \mu $,
			\item[\textup{(3)}] $ x \in \partial^* E_t $, and $ T_x \, \mu_t = T_x \, ( \partial^* E_t ) $,
			\item[\textup{(4)}] $ \mathbf{ p } ( \nu_E ( x , t ) ) \neq 0 $, and $ \nu_{ E_t } ( x ) = | \mathbf{ p } ( \nu_E ( x , t ) ) |^{ - 1 }\,\mathbf{ p } ( \nu_E ( x , t ) ) $,
			\item[\textup{(5)}] $ T_x\,( \partial^* E_t ) \times \{ 0 \} $ is linear subspace of $ T_{ ( x , t ) }\, \mu $.
		\end{description}
		\label{princi}	
	\end{prop}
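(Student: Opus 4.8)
The plan is to extract all five claims from four facts already in hand: the absolute continuity $|\nabla^{\prime}\chi_E|\ll\mu$ (Proposition \ref{varphi<<mu}), the slicewise bound $\|\nabla\chi_{E_t}\|\le\mu_t$ (Theorem \ref{Brakketrans}(b)), the upper density bound of Lemma \ref{denslemma}, and the integrality of $\mu_t$ together with Brakke's perpendicularity theorem. I will use De Giorgi's structure theorem in $\R^{n+1}$ to identify $|\nabla^{\prime}\chi_E|=\mathcal{H}^{n}\llcorner_{\partial^{*}E}$ with $\nabla^{\prime}\chi_E=-\nu_E\,\mathcal{H}^{n}\llcorner_{\partial^{*}E}$, and in $\R^{n}$ to identify $\|\nabla\chi_{E_t}\|=\mathcal{H}^{n-1}\llcorner_{\partial^{*}E_t}$; I also record that Lemma \ref{denslemma} yields, for every $T>0$, a bound $\mu(B^{n+1}_{r}(x,t))\le C(T)\,r^{n}$ valid for all $t\ge T$ (integrate the slice estimate over $\mathbf{q}$), so that $\mu$ has finite upper $n$-density everywhere on $\{t>0\}$.

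I would first dispatch the rectifiability of $\mu\llcorner_{\partial^{*}E}$ and claims (3) and (4), all of which are of the measure-theoretic type. Since $\mathcal{H}^{n}\llcorner_{\partial^{*}E}\ll\mu$, the Radon--Nikod\'ym density $\theta$ of $\mathcal{H}^{n}\llcorner_{\partial^{*}E}$ with respect to $\mu$ satisfies, by comparing densities at a rectifiable point ($\Theta^{n}(\mathcal{H}^{n}\llcorner_{\partial^{*}E},\cdot)=1$ and $\Theta^{*n}(\mu,\cdot)\le C$), $0<\theta<\infty$ $\mathcal{H}^{n}$-a.e.\ on $\partial^{*}E$; hence $\mu\llcorner_{\partial^{*}E}=\theta^{-1}\mathcal{H}^{n}\llcorner_{\partial^{*}E}$ is an $n$-rectifiable Radon measure, mutually absolutely continuous with $\mathcal{H}^{n}\llcorner_{\partial^{*}E}$. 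For the tangent identification in (3): from $\mathcal{H}^{n-1}\llcorner_{\partial^{*}E_t}=\|\nabla\chi_{E_t}\|\le\mu_t$ one gets a density $0\le\psi_t\le1$ with $\mathcal{H}^{n-1}\llcorner_{\partial^{*}E_t}=\psi_t\,\mu_t$, and since $\mathcal{H}^{n-1}\llcorner_{\partial^{*}E_t}(\{\psi_t=0\})=0$ the measures $\mu_t$ and $\mathcal{H}^{n-1}\llcorner_{\partial^{*}E_t}$ are mutually absolutely continuous on $\partial^{*}E_t$ and so have the same approximate tangent planes there, giving $T_x\mu_t=T_x(\partial^{*}E_t)$ for a.e.\ $t$. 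For (4), set $\Sigma:=\{(x,t)\in\partial^{*}E:\mathbf{p}(\nu_E)=0\}$: the coarea formula for $\mathbf{q}$ restricted to the rectifiable set $\partial^{*}E$ has coarea factor $|\mathbf{p}(\nu_E)|$, which vanishes on $\Sigma$, whence $\mathcal{H}^{n-1}(\Sigma_t)=0$ for a.e.\ $t$; since $\mu_t$ is integral, hence absolutely continuous with respect to $\mathcal{H}^{n-1}$, this forces $\mu_t(\Sigma_t)=0$ for a.e.\ $t$, so $\mu(\Sigma)=0$, and then $\mathcal{H}^{n}\llcorner_{\partial^{*}E}\ll\mu$ gives $\mathcal{H}^{n}(\Sigma)=0$, i.e.\ $\mathbf{p}(\nu_E)\ne0$ $\mathcal{H}^{n}$-a.e.\ on $\partial^{*}E$. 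Once this is known, the slicing theory of sets of finite perimeter identifies, for a.e.\ $t$, the slice of $\partial^{*}E$ with $\partial^{*}E_t$ up to $\mathcal{H}^{n-1}$-null sets, gives $x\in\partial^{*}E_t$, and yields $\nu_{E_t}(x)=|\mathbf{p}(\nu_E(x,t))|^{-1}\mathbf{p}(\nu_E(x,t))$, completing (3) and (4).

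The substantive content is (1) --- existence of the tangent plane of the \emph{full} measure $\mu$ at $\mathcal{H}^{n}$-a.e.\ point of $\partial^{*}E$, not merely of $\mu\llcorner_{\partial^{*}E}$ --- together with (2). I would prove these jointly by a blow-up, following the $L^{2}$-flow analysis of Mugnai--R\"oger \cite{roger2008allen} (see also \cite{StuvardTonegawa+2022,tashiro2023existence}). Using the compactness of measure--function pairs applied to the approximate velocities of the Allen--Cahn flow (essentially $-\partial_t\varphi^{\varepsilon}\,\nabla\varphi^{\varepsilon}/|\nabla\varphi^{\varepsilon}|^{2}$, whose $L^{2}(\mu^{\varepsilon})$-norms are controlled by Lemma \ref{apriori}), one obtains a limit velocity $v\in L^{2}(\mu;\R^{n})$ with $v\perp T_x\mu_t$ realizing the $L^{2}$-flow structure of Definition \ref{L2def}, and one identifies $v=h$ $\mu$-a.e.\ (via the vanishing discrepancy, Lemma \ref{discripancy}, which makes $\partial_t\varphi^{\varepsilon}$ converge to the mean curvature in the appropriate sense). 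Now fix an $\mathcal{H}^{n}$-generic $(x,t)\in\partial^{*}E$ at which $\mu_t$ has an approximate tangent plane with positive integer multiplicity and $v,h$ are approximately continuous: by the density bound the rescalings $r^{-n}(\eta_{(x,t),r})_{\#}\mu$ are precompact; the rescalings of $\chi_E$ converge to $\chi_{\{y\cdot\nu_E\le0\}}$ by (4) and the rectifiability of $\partial^{*}E$; and the $L^{2}$-flow relation forces every subsequential limit of $r^{-n}(\eta_{(x,t),r})_{\#}\mu$ to be a constant multiple of $\mathcal{H}^{n}$ on the $n$-plane $\big(T_x\mu_t\times\{0\}\big)\oplus\R\,(v(x,t),1)$. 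Comparing with $r^{-n}(\eta_{(x,t),r})_{\#}(\mu\llcorner_{\partial^{*}E})\to\theta^{-1}\mathcal{H}^{n}\llcorner_{T_{(x,t)}(\partial^{*}E)}$ and using $\mu\ge\mu\llcorner_{\partial^{*}E}$, this $n$-plane must coincide with $T_{(x,t)}(\partial^{*}E)$; in particular the limit is unique, so $T_{(x,t)}\mu$ exists and equals $T_{(x,t)}(\partial^{*}E)$, and $(h(x,t),1)=(v(x,t),1)$ lies in it. Finally (5) is linear algebra given (1), (3), (4): if $\xi\in T_x(\partial^{*}E_t)$, i.e.\ $\xi\perp\nu_{E_t}$ in $\R^{n}$, then $(\xi,0)\cdot\nu_E=\xi\cdot\mathbf{p}(\nu_E)=|\mathbf{p}(\nu_E)|\,(\xi\cdot\nu_{E_t})=0$, so $(\xi,0)\in\nu_E^{\perp}=T_{(x,t)}(\partial^{*}E)=T_{(x,t)}\mu$.

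The main obstacle is exactly the blow-up step for the full measure $\mu$ in the previous paragraph: one must show that the ``excess'' mass $\mu-\mu\llcorner_{\partial^{*}E}$ carries no density in the limit and that the limiting plane is spanned by $T_x\mu_t$ and the velocity direction $(v,1)$. This is where soft measure theory no longer suffices, and where the $L^{2}$-flow velocity (its construction via measure--function pairs and the identity $v=h$), the integrality of $\mu_t$ together with Brakke's perpendicularity theorem (which pins both $h$ and $T_x\mu_t$ to $\nu_{E_t}$), and the density bound of Lemma \ref{denslemma} must all be used simultaneously.
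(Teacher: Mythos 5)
Much of your plan is sound, and in places genuinely different from the paper. The rectifiability of $\mu\llcorner_{\partial^*E}$ via the density bound and the mutual absolute continuity of $\mu\llcorner_{\partial^*E}$ and $\mathcal{H}^n\llcorner_{\partial^*E}$ is essentially the paper's argument. Your route to (3)--(4) is different: the paper first proves (1)--(2) and uses $(h,1)\in T_{(x,t)}(\partial^*E)$ to conclude that the coarea factor $|\nabla^{\partial^*E}\mathbf{q}|=|\mathbf{p}(\nu_E)|$ is positive $\mathcal{H}^n$-a.e., whereas you kill the degenerate set $\Sigma=\{\mathbf{p}(\nu_E)=0\}$ directly: its slices are $\mathcal{H}^{n-1}$-null by the coarea formula, hence $\mu_t$-null by integrality of $\mu_t$, hence $\mu(\Sigma)=\int_0^\infty\mu_t(\Sigma_t)\,dt=0$, and then $\mathcal{H}^n\llcorner_{\partial^*E}=|\nabla'\chi_E|\ll\mu$ (Proposition \ref{varphi<<mu}) gives $\mathcal{H}^n(\Sigma)=0$. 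That is correct and decouples (3)--(4) from (1)--(2); once $|\mathbf{p}(\nu_E)|>0$ holds $\mathcal{H}^n$-a.e., the coarea formula upgrades the ``a.e.\ $t$, $\mathcal{H}^{n-1}$-a.e.\ $x$'' statements of Lemma \ref{Slice} (and your locality argument for $T_x\,\mu_t=T_x(\partial^*E_t)$) to $\mathcal{H}^n$-a.e.\ statements on $\partial^*E$, exactly as in the paper. Claim (5) is the same linear algebra in both.

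The genuine gap is in (1)--(2), which you yourself flag as ``the main obstacle'' but do not resolve. You assert that the $L^2$-flow relation forces every subsequential blow-up limit of $\mu$ at a generic point of $\partial^*E$ to be a constant multiple of $\mathcal{H}^n$ on the plane $(T_x\,\mu_t\times\{0\})\oplus\mathrm{span}(v(x,t),1)$. The $L^2$-flow inequality, after rescaling and using approximate continuity of $v$, only yields that blow-up limits are invariant under translation in the single direction $(v(x,t),1)$; it does not make them planar, and you give no argument for why the blow-ups of the \emph{full} measure $\mu$ should be of the form $c\,\mathcal{H}^n\llcorner_P$. In fact no flow input is needed for (1) at all: since $\mu\llcorner_{(\partial^*E)^c}$ assigns zero mass to $\partial^*E$, the standard upper-density comparison theorem from \cite{simon1983lectures} gives $\Theta^{*n}\bigl(\mu\llcorner_{(\partial^*E)^c},(x,t)\bigr)=0$ for $\mathcal{H}^n$-a.e.\ $(x,t)\in\partial^*E$, so at Lebesgue points of $f=d(\mu\llcorner_{\partial^*E})/d|\nabla'\chi_E|$ the blow-up of $\mu$ coincides with that of $f\,\mathcal{H}^n\llcorner_{\partial^*E}$, i.e.\ $T_{(x,t)}\mu$ exists and equals $T_{(x,t)}(\partial^*E)$; this soft argument is the paper's proof of (1). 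Given (1), claim (2) follows immediately from Propositions \ref{BtoL2} and \ref{L2cor} (which apply exactly where the tangent exists), transferred from $\mu$-a.e.\ to $\mathcal{H}^n$-a.e.\ on $\partial^*E$ by the mutual absolute continuity; your proposed re-derivation of the velocity from the Allen--Cahn approximation via measure--function pairs is therefore unnecessary and, as written, does not close the planarity step. Note also that the span decomposition of $T_{(x,t)}\mu$ is not part of Proposition \ref{princi}: it is deduced afterwards from (1)--(5) in the proof of Theorem \ref{mainresult1}, so building it into the blow-up makes the step harder than it needs to be.
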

	The key step of the proof of Theorem \ref{mainresult1} is to prove the above proposition, for which the property of $ L^2 $ flow plays a central role, and this proposition is proved in detail by \cite[Lemma 4.7]{StuvardTonegawa+2022}. In this paper, we will give a brief outline of the proof of Proposition \ref{princi}.
	\vskip.5\baselineskip
	First, we introduce the following property for Brakke flows \cite[Theorem 4.3]{StuvardTonegawa+2022}. Note that the following claim holds for the generalized Brakke flows of \cite{takasao2016existence} and \cite{jiang2020convergence} with a slight modification of the proof.
	\begin{prop}
		\label{BtoL2}
		Let $ \{ \mu_t \}_{ t \in \R^+ } $ be a Brakke flow as in Definition \ref{Brakke}. Then $ \{ \mu_t \}_{ t \in \R^+ } $ is an $ L^2 $ flow with the velocity $ v = h $. Namely, there exists $ C = C ( \mu ) > 0 $ such that
		\begin{equation}
			\label{L2ineq}
			\left| \int_{ \R^+ } \int_{ \R^n } \partial_t \phi + \nabla \phi \cdot h\ d \mu_t d t\,\right| \leq C\,\| \phi \|_{ C^0}, 
		\end{equation}
		for all $ \phi \in C^1_c ( \R^n \times ( 0 , \infty) ) $.
	\end{prop}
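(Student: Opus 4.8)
The plan is to verify, with $v=h$, the three conditions (a), (b'1), (b'2) of Definition~\ref{L2def}; the first two are little more than a rewording of the Brakke flow hypotheses. Indeed, that $\mu:=\mu_t\,dt$ is a Radon measure on $\R^n\times\R^+$ follows from Definition~\ref{Brakke}(2), since for a compact $K\subset\R^n$ and $s>0$ one has $\mu(K\times[0,s])=\int_0^s\mu_t(K)\,dt\le s\sup_{t\in[0,s]}\mu_t(K)<\infty$, which gives local finiteness (hence the Radon property); and $h\in L^2(\mu;\R^n)$ is precisely Definition~\ref{Brakke}(3). Moreover, for a.e.\ $t$ the measure $\mu_t$ is integral with generalized mean curvature $h(\cdot,t)$ (Definition~\ref{Brakke}(1)), so Brakke's perpendicularity theorem yields $h(\cdot,t)\perp T_x\,\mu_t$ for $\mu_t$-a.e.\ $x$, i.e.\ $v=h\perp T_x\,\mu_t$ for $\mu$-a.e.\ $(x,t)$, which is (b'1).

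The substance is~(\ref{L2ineq}). The difficulty is that the Brakke inequality~(\ref{Brakkeineq}) is \emph{one-sided} and is posited only for \emph{non-negative} test functions, whereas~(\ref{L2ineq}) is a two-sided bound for sign-changing $\phi$; I would bridge this by a doubling-with-cutoff argument. Fix $\phi\in C^1_c(\R^n\times(0,\infty))$ and choose $\Phi\in C^1_c(\R^n\times(0,\infty);[0,1])$ with $\Phi\equiv1$ on a neighbourhood of $\mathrm{supp}\,\phi$; then $\psi_\pm:=\|\phi\|_{C^0}\Phi\pm\phi$ lies in $C^1_c(\R^n\times(0,\infty);\R^+)$ (it is $\ge0$ on $\mathrm{supp}\,\phi$ because $|\phi|\le\|\phi\|_{C^0}$ there, and $\ge0$ elsewhere because $\psi_\pm=\|\phi\|_{C^0}\Phi$). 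Since $\mathrm{supp}\,\psi_\pm$ is compact inside $\R^n\times(0,\infty)$, applying~(\ref{Brakkeineq}) to $\psi_\pm$ with $t_1$ below and $t_2$ above its time-support kills the boundary terms $\mu_{t_i}(\psi_\pm(\cdot,t_i))$ and turns the time integral into one over all of $\R^+$; rewriting the right-hand side of~(\ref{Brakkeineq}) as $\int(\partial_t\psi_\pm+\nabla\psi_\pm\cdot h)\,d\mu_t\,dt-\int\psi_\pm|h|^2\,d\mu_t\,dt$ gives
\begin{equation*}
\begin{split}
0 &\le \int_{\R^+}\!\!\int_{\R^n}\bigl(\partial_t\psi_\pm+\nabla\psi_\pm\cdot h\bigr)\,d\mu_t\,dt - \int_{\R^+}\!\!\int_{\R^n}\psi_\pm\,|h|^2\,d\mu_t\,dt\\
&\le \int_{\R^+}\!\!\int_{\R^n}\bigl(\partial_t\psi_\pm+\nabla\psi_\pm\cdot h\bigr)\,d\mu_t\,dt,
\end{split}
\end{equation*}
the last step because $\psi_\pm\ge0$. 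By linearity this reads $\mp\int(\partial_t\phi+\nabla\phi\cdot h)\,d\mu_t\,dt\le\|\phi\|_{C^0}\int(\partial_t\Phi+\nabla\Phi\cdot h)\,d\mu_t\,dt$, i.e.\ (\ref{L2ineq}) holds with $C:=\int_{\R^+}\int_{\R^n}(\partial_t\Phi+\nabla\Phi\cdot h)\,d\mu_t\,dt<\infty$ (finite since $\Phi$ has compact support, $\mu$ is Radon, and $h\in L^2(\mu)$).

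The only genuinely non-routine ingredient is the splitting $\psi_\pm=\|\phi\|_{C^0}\Phi\pm\phi$, which converts the one-sided area-decay inequality into the two-sided functional bound characteristic of an $L^2$ flow; everything else is bookkeeping. Two small points must be recorded: that $\psi_\pm(\cdot,t)$ vanishes identically for $t$ near $0$ and for $t$ large, so the endpoint contributions truly drop (immediate from compactness of $\mathrm{supp}\,\psi_\pm$ in $\R^n\times(0,\infty)$); and that a priori the constant $C$ depends on the auxiliary cutoff $\Phi$, hence on $\mathrm{supp}\,\phi$. In the setting of Theorem~\ref{Brakketrans} the latter dependence is inessential: using the uniform bounds $\sup_t\mu_t(\R^n)\le\mu_0(\R^n)$ and $\int_{\R^+}\int_{\R^n}|h|^2\,d\mu_t\,dt<\infty$ and working on a fixed finite time horizon, one may choose $\Phi$ — and thereby $C$ — depending only on $\mu$, which gives $C=C(\mu)$ as stated.
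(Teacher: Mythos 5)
Your overall strategy is sound, and it is in fact the mechanism behind the result the paper invokes here (the paper gives no proof of Proposition \ref{BtoL2}, deferring to Stuvard--Tonegawa, Theorem 4.3): conditions (a) and (b'1) follow from Definition \ref{Brakke}(1)--(3) and Brakke's perpendicularity theorem as you say, and the doubling trick $\psi_\pm=\|\phi\|_{C^0}\Phi\pm\phi$ combined with the Brakke inequality (with endpoints outside the time-support, so the boundary terms vanish) correctly yields $\bigl|\int_{\R^+}\int_{\R^n}(\partial_t\phi+\nabla\phi\cdot h)\,d\mu_t\,dt\bigr|\le \|\phi\|_{C^0}\,C_\Phi$ with $C_\Phi:=\int_{\R^+}\int_{\R^n}(\partial_t\Phi+\nabla\Phi\cdot h)\,d\mu_t\,dt$. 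The genuine gap is your last paragraph, which is exactly where the proposition has content: $C_\Phi$ depends on $\Phi$, hence on $\mathrm{supp}\,\phi$, and your assertion that ``one may choose $\Phi$ --- and thereby $C$ --- depending only on $\mu$'' cannot be taken literally, since $\Phi$ must equal $1$ on a neighbourhood of the (arbitrary) support of $\phi$. What has to be proved is that $C_\Phi$ admits a bound independent of $\phi$. This is done by a quantitative choice: take $\Phi(x,t)=\eta_R(x)\chi(t)$ with $\chi\equiv 1$ on the time-support of $\phi$, $0\le\chi\le1$, $\int_{\R^+}|\chi'|\,dt\le 2$, and $\eta_R\equiv1$ on a ball containing $\mathbf{p}(\mathrm{supp}\,\phi)$ with $|\nabla\eta_R|\le 2/R$. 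Then $\bigl|\int\chi'(t)\,\mu_t(\eta_R)\,dt\bigr|\le 2\sup_t\mu_t(\R^n)$, while by Cauchy--Schwarz the term $\int\!\chi\,\nabla\eta_R\cdot h\,d\mu_t dt$ is bounded by $(2/R)\bigl(T\sup_t\mu_t(\R^n)\bigr)^{1/2}\|h\|_{L^2(d\mu_t dt)}$ on the fixed time horizon $[0,T]$ determined by $\phi$, hence tends to $0$ as $R\to\infty$. Letting $R\to\infty$ for each fixed $\phi$ gives $C=2\sup_t\mu_t(\R^n)$ (say, $\max\{2\sup_t\mu_t(\R^n),1\}$), which depends only on $\mu$. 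Without this estimate the constant you produce depends on $\phi$, and the statement is not proved.

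Note also that the global bound $\sup_t\mu_t(\R^n)<\infty$ is genuinely necessary, not a convenience: for a Brakke flow as in Definition \ref{Brakke} with only locally finite mass the claimed uniform constant can fail. For instance, $\mu_t=\mathcal{H}^{n-1}\llcorner_P$ for $t<1$ and $\mu_t=0$ for $t\ge1$, with $P$ a hyperplane, satisfies (1)--(4) with $h\equiv0$, yet $\int_{\R^+}\int_{\R^n}\partial_t\phi\,d\mu_t\,dt=\int_P\phi(\cdot,1)\,d\mathcal{H}^{n-1}$ is not bounded by $C\|\phi\|_{C^0}$. So your retreat to the setting of Theorem \ref{Brakketrans}, where $\sup_t\mu_t(\R^n)\le\|\nabla\chi_{E_0}\|(\R^n)<\infty$ and $h\in L^2(d\mu_t dt)$, is the right move and is implicitly how the proposition is meant to be read; but it is the quantitative cutoff estimate above, not a choice of a single $\Phi$ ``depending only on $\mu$'', that closes the argument.
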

 
	The following is a simple property of $ L^2 $ flow (\cite[Proposition 3.3]{roger2008allen}).
	\begin{prop}
		Let $ \{ \mu_t \}_{ t \in \R^+ } $ be an $ L^2 $ flow with the velocity $ v $ as in Definition \ref{L2def}. Let $ \mu $ be the space-time measure $ d \mu = d \mu_t d t $. Then,
		\begin{equation}
			\begin{pmatrix}
				v( x , t )\\
				1
			\end{pmatrix} \in T_{ ( x , t ) }\, \mu
		\end{equation}
		at $ \mu $-a.e.\,$ ( x , t ) $ wherever the tangent space $ T_{ ( x , t ) }\,\mu $ exists.
		\label{L2cor}
	\end{prop}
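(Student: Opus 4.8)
The plan is a blow-up argument about a $\mu$-generic point. First I would reinterpret the defining inequality \eqref{L2flow}: writing $\nabla^{\prime}=(\nabla,\partial_t)$ for the full space-time gradient, the integrand there is $\nabla^{\prime}\phi\cdot(v,1)$, so \eqref{L2flow} says exactly that the linear functional $\phi\mapsto\int_{\R^n\times\R^+}\nabla^{\prime}\phi\cdot(v,1)\,d\mu$ is bounded on $C^0_c$ by $C\,\|\phi\|_{C^0}$. Hence, by the Riesz representation theorem, there is a finite signed Radon measure $\lambda$ on $\R^n\times(0,\infty)$, with $|\lambda|(\R^n\times(0,\infty))\le C$, such that $\mathrm{div}^{\prime}\big((v,1)\mu\big)=\lambda$ in the distributional sense.

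Next, a density estimate for $\lambda$. Decompose $\lambda=g\mu+\lambda^{s}$ with $g\in L^1(\mu)$ and $\lambda^{s}\perp\mu$. For $\mu$-a.e.\ $z_0$, the point $z_0$ is a Lebesgue point (with respect to $\mu$) of $g$ and of $(v,1)\in L^2(\mu;\R^{n+1})$, and $|\lambda^{s}|(B_r(z_0))/\mu(B_r(z_0))\to 0$ as $r\to0$ by Besicovitch differentiation. Fix such a $z_0=(x_0,t_0)$ at which, moreover, the tangent plane $T:=T_{z_0}\mu$ exists, say $\mu_{z_0,r}:=(\eta_{z_0,r})_{\#}\mu/r^{n}\rightharpoonup\theta\,\mathcal{H}^{n}\llcorner_{T}$ with $\theta>0$, where $\eta_{z_0,r}(z):=(z-z_0)/r$; then $\mu(B_r(z_0))=O(r^{n})$, so that $|\lambda|(B_r(z_0))\le\int_{B_r(z_0)}|g|\,d\mu+|\lambda^{s}|(B_r(z_0))=O(r^{n})$, and in particular $|\lambda|(B_r(z_0))/r^{n-1}\to 0$. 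The point is that $(v,1)\mu$ is an $n$-dimensional object while its divergence $\lambda$ is merely finite, so that after the correct rescaling the divergence disappears.

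The blow-up itself: set $V_r:=(\eta_{z_0,r})_{\#}\big((v,1)\mu\big)/r^{n}=(v(z_0+r\,\cdot),1)\,\mu_{z_0,r}$ and $\tilde\lambda_r:=(\eta_{z_0,r})_{\#}\lambda/r^{n-1}$; the chain rule gives $\mathrm{div}^{\prime}(V_r)=\tilde\lambda_r$ (up to sign), and for $\phi\in C^1_c(B_1(0))$ one has $|\tilde\lambda_r(\phi)|\le\|\phi\|_{C^0}\,|\lambda|(B_r(z_0))/r^{n-1}\to 0$, hence $\tilde\lambda_r\rightharpoonup 0$. On the other hand the Lebesgue-point property yields, for each $\rho>0$, $\int_{B_\rho(0)}|v(z_0+rw)-v(z_0)|\,d\mu_{z_0,r}(w)=r^{-n}\int_{B_{\rho r}(z_0)}|v-v(z_0)|\,d\mu\to 0$, which together with $\mu_{z_0,r}\rightharpoonup\theta\,\mathcal{H}^{n}\llcorner_{T}$ gives the measure-function-pair convergence $V_r\rightharpoonup\theta\,(v(z_0),1)\,\mathcal{H}^{n}\llcorner_{T}$ (this is where Hutchinson's compactness theorem for measure-function pairs enters, cf.\ Theorem~\ref{measurefunctiontheorem}). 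Since the distributional divergence is continuous under weak-$\ast$ convergence of vector measures, passing to the limit gives $\mathrm{div}^{\prime}\big(\theta\,(v(z_0),1)\,\mathcal{H}^{n}\llcorner_{T}\big)=0$, i.e.\ $\int_{T}\nabla^{\prime}\phi\cdot(v(z_0),1)\,d\mathcal{H}^{n}=0$ for every $\phi\in C^1_c(\R^{n+1})$. Finally, decompose $(v(z_0),1)=a+b$ with $a=P_{T}(v(z_0),1)\in T$ and $b\perp T$: the $a$-part integrates to zero because $a$ is a constant tangential field on the flat $n$-plane $T$, so $\int_{T}\partial_b\phi\,d\mathcal{H}^{n}=0$ for all $\phi$, and choosing $\phi$ of product form (a bump in the directions of $T$ times a function with nonzero derivative at $0$ along $b$) forces $b=0$, i.e.\ $(v(z_0),1)\in T_{z_0}\mu$; this holds at $\mu$-a.e.\ point where the tangent plane exists, which is the assertion.

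The step I expect to be the main obstacle is the passage to the limit $V_r\rightharpoonup\theta\,(v(z_0),1)\,\mathcal{H}^{n}\llcorner_{T}$: one must combine the weak-$\ast$ convergence of the rescaled measures $\mu_{z_0,r}$ with the merely $L^1$ convergence of the rescaled vector fields $v(z_0+r\,\cdot)$ — precisely the content of the measure-function-pair compactness theorem — and then use continuity of $\mathrm{div}^{\prime}$ to transfer the vanishing of $\tilde\lambda_r$ to the limiting object; moreover the density estimate of the second step has to be invoked with the normalisation $r^{n-1}$ (matching the scaling of the divergence) for this transfer to be effective.
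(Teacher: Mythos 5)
Your argument is correct, and it is essentially the proof the paper points to: Proposition \ref{L2cor} is quoted from Mugnai--R\"oger \cite[Proposition 3.3]{roger2008allen} rather than proved here, and your route---representing the bounded functional in \eqref{L2flow} as a finite measure $\lambda$ via Riesz, noting $|\lambda|(B_r(z_0))=O(r^n)=o(r^{n-1})$ at $\mu$-generic points, blowing up at a point where the approximate tangent plane exists, and testing the limiting identity $\int_T\nabla'\psi\cdot(v(z_0),1)\,d\mathcal H^n=0$ against product-type test functions to kill the normal component---is the standard argument behind that citation. One small remark: your Lebesgue-point computation already identifies the limit $V_r\rightharpoonup\theta\,(v(z_0),1)\,\mathcal H^n\llcorner_T$ directly, so the appeal to Theorem \ref{measurefunctiontheorem} at that step is not actually needed.
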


	For the proof of Proposition \ref{princi}, we need the following general facts on sets of finite perimeter (\cite[Theorem 18.11]{maggi2012sets}).
	\begin{lemma}
		If $ E \subset \R^n \times \R $ is a set of locally finite perimeter, then the horizontal section $ E_t $ is a set of locally finite perimeter in $ \R^n $ for a.e.\,$ t \in \R $, and the following properties hold:
		\begin{description}
			\item[\textup{(1)}] $ \mathcal{ H }^{ n - 1 } ( \partial^* E_t \Delta ( \partial^* E )_t ) = 0 $,
			\item[\textup{(2)}] $ \mathbf{ p } ( \nu_E ( x , t ) ) \neq 0 $ for $ \mathcal{ H }^{ n - 1 } $-a.e.\,$ x \in ( \partial^* E )_t $,
			\item[\textup{(3)}] $ \nabla \chi_{ E_t } = | \mathbf{ p } ( \nu_E ( x , t ) ) |^{ - 1 } \mathbf{ p } ( \nu_E ( x , t ) )\,\mathcal{ H }^{ n - 1 } \llcorner_{ ( \partial^* E )_t } $.
		\end{description}
		\label{Slice}
	\end{lemma}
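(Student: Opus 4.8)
The plan is to obtain the statement --- which is the slicing theorem for sets of locally finite perimeter, \cite[Theorem 18.11]{maggi2012sets} --- from the coarea formula for rectifiable sets applied to the projection $ \mathbf{ q } $. Since $ E $ has locally finite perimeter, $ \partial^* E $ is countably $ n $-rectifiable, $ \mathcal{ H }^n \llcorner_{ \partial^* E } $ is the locally finite perimeter measure of $ E $, and the approximate tangent plane of $ \partial^* E $ at $ ( x , t ) $ is $ \nu_E ( x , t )^{ \perp } \subset \R^n \times \R $, where $ \nu_E = ( \mathbf{ p }( \nu_E ) , \mathbf{ q }( \nu_E ) ) $ and $ | \mathbf{ p }( \nu_E ) |^2 + | \mathbf{ q }( \nu_E ) |^2 = 1 $. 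First I would identify the tangential coarea factor of $ \mathbf{ q }|_{ \partial^* E } $: as $ \mathbf{ q } $ is linear, its tangential gradient along $ \nu_E^{ \perp } $ is the orthogonal projection of $ e_{ n + 1 } $ onto $ \nu_E^{ \perp } $, namely $ e_{ n + 1 } - \mathbf{ q }( \nu_E )\,\nu_E $, whose norm equals $ \sqrt{ 1 - | \mathbf{ q }( \nu_E ) |^2 } = | \mathbf{ p }( \nu_E ) | $. The coarea formula then gives, for every Borel $ g \geq 0 $,
\begin{equation}
\int_{ \partial^* E } g\,| \mathbf{ p }( \nu_E ) |\ d \mathcal{ H }^n = \int_{ \R } \left( \int_{ ( \partial^* E )_t } g\ d \mathcal{ H }^{ n - 1 } \right) d t , \label{slicecoarea}
\end{equation}
with $ ( \partial^* E )_t := \{ x \in \R^n \mid ( x , t ) \in \partial^* E \} $.

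Two consequences would be read off from \eqref{slicecoarea}. Choosing $ g = \chi_{ \{ \mathbf{ p }( \nu_E ) = 0 \} } $ makes the left-hand side vanish, so $ \mathcal{ H }^{ n - 1 } ( \{ x \in ( \partial^* E )_t : \mathbf{ p }( \nu_E ( x , t ) ) = 0 \} ) = 0 $ for a.e.\ $ t $, which is (2). Choosing $ g = \chi_{ K' \times [ a , b ] } $ for $ K' \subset \R^n $ compact and $ a < b $, and using $ | \mathbf{ p }( \nu_E ) | \leq 1 $, yields $ \int_a^b \mathcal{ H }^{ n - 1 } ( ( \partial^* E )_t \cap K' )\ d t \leq \mathcal{ H }^n ( \partial^* E \cap ( K' \times [ a , b ] ) ) < \infty $, hence $ \mathcal{ H }^{ n - 1 } ( ( \partial^* E )_t \cap K' ) < \infty $ for a.e.\ $ t $; thus $ \chi_{ E_t } $ has locally bounded variation, i.e.\ $ E_t $ has locally finite perimeter, for a.e.\ $ t $.

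For (3) I would test $ \chi_E $ against spatial divergences. Denoting by $ \nabla^x \chi_E $ the $ \R^n $-valued spatial part of the Gauss--Green measure of $ E $, De Giorgi's structure theorem gives $ \nabla^x \chi_E = \mathbf{ p }( \nu_E )\,\mathcal{ H }^n \llcorner_{ \partial^* E } $ (the orientation of $ \nu_E $ being that of \cite{maggi2012sets}), so that for every $ X \in C^1_c ( \R^n \times \R ; \R^n ) $ the divergence theorem for $ E $ and Fubini yield
\begin{equation}
\int_{ \R } \left( \int_{ \R^n } \chi_{ E_t }\,\mathrm{ div }_x X ( \cdot , t )\ d x \right) d t = \int_{ \R^n \times \R } \chi_E\,\mathrm{ div }_x X\ d x d t = \int_{ \partial^* E } X \cdot \mathbf{ p }( \nu_E )\ d \mathcal{ H }^n ,
\end{equation}
and applying \eqref{slicecoarea} with $ g = X \cdot | \mathbf{ p }( \nu_E ) |^{ - 1 } \mathbf{ p }( \nu_E ) $ on $ \{ \mathbf{ p }( \nu_E ) \neq 0 \} $ (which is admissible since $ | X \cdot \mathbf{ p }( \nu_E ) | \leq \| X \|_{ C^0 } $ on $ \partial^* E $) rewrites the last integral as $ \int_{ \R } \left( \int_{ ( \partial^* E )_t } X \cdot | \mathbf{ p }( \nu_E ) |^{ - 1 } \mathbf{ p }( \nu_E )\ d \mathcal{ H }^{ n - 1 } \right) d t $, the inner integral being well defined for a.e.\ $ t $ by (2). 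Since this identity holds for every $ X $, I would fix a family $ \{ X_k \}_{ k \in \N } $ dense in $ C^1_c ( \R^n \times \R ; \R^n ) $ with respect to the $ C^1 $ norm over a fixed exhaustion by compact sets, take the union of the associated exceptional null sets of $ t $'s, and conclude that for a.e.\ $ t $ one has $ \nabla \chi_{ E_t } = | \mathbf{ p }( \nu_E ) |^{ - 1 } \mathbf{ p }( \nu_E )\,\mathcal{ H }^{ n - 1 } \llcorner_{ ( \partial^* E )_t } $, which is (3). Since $ | \mathbf{ p }( \nu_E ) |^{ - 1 } \mathbf{ p }( \nu_E ) $ is a unit vector, $ | \nabla \chi_{ E_t } | = \mathcal{ H }^{ n - 1 } \llcorner_{ ( \partial^* E )_t } $; comparing this with the De Giorgi representation $ | \nabla \chi_{ E_t } | = \mathcal{ H }^{ n - 1 } \llcorner_{ \partial^* E_t } $ and using that $ \mathcal{ H }^{ n - 1 } \llcorner_A = \mathcal{ H }^{ n - 1 } \llcorner_B $ forces $ \mathcal{ H }^{ n - 1 } ( A \Delta B ) = 0 $ gives (1).

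The step I expect to be the main obstacle is the passage from ``for each fixed $ X $ the sliced identity holds for a.e.\ $ t $'' to ``for a.e.\ $ t $ the sliced identity holds for all $ X $'': the exceptional set of $ t $ a priori depends on $ X $, so one must exploit the separability of $ C^1_c $ to collapse it to a single null set while keeping uniform (in $ X $) control on the total variation of $ \chi_{ E_t } $ over compact sets --- this is exactly where the local finiteness bound coming from \eqref{slicecoarea} enters --- so that the limiting spatial derivative of $ \chi_{ E_t } $ is genuinely represented by $ | \mathbf{ p }( \nu_E ) |^{ - 1 } \mathbf{ p }( \nu_E )\,\mathcal{ H }^{ n - 1 } \llcorner_{ ( \partial^* E )_t } $ and is not merely dominated by it in total variation.
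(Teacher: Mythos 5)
The paper offers no proof of Lemma \ref{Slice}: it is quoted directly from \cite[Theorem 18.11]{maggi2012sets}, so the only comparison available is with the textbook argument, and your proposal is essentially that argument --- the coarea formula on the countably $n$-rectifiable set $\partial^* E$ for the projection $\mathbf{q}$ (tangential coarea factor $|\mathbf{p}(\nu_E)|$), the Gauss--Green theorem applied to horizontal fields $(X,0)$, and a separability argument in the test field. The structure is sound, but two steps should be tightened. First, the deduction ``$\mathcal{H}^{n-1}((\partial^* E)_t\cap K')<\infty$ for a.e.\ $t$, hence $E_t$ has locally finite perimeter'' is not valid on its own: finiteness of the candidate boundary measure says nothing yet about the distributional gradient of $\chi_{E_t}$. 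What proves $\chi_{E_t}\in BV_{loc}$ is the sliced Gauss--Green identity you establish afterwards, with the coarea bound supplying the uniform estimate $\sup\{\int_{\R^n}\chi_{E_t}\,\mathrm{div}\,Y\,dx \mid \|Y\|_{C^0}\le 1,\ \mathrm{spt}\,Y\subset K'\}\le\mathcal{H}^{n-1}((\partial^* E)_t\cap K')$; so local finiteness of the perimeter and property (3) are obtained simultaneously, not in the order you state them.

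Second, your exceptional null sets are attached to the wrong objects: for a fixed space-time field $X_k\in C^1_c(\R^n\times\R;\R^n)$ you only have a single identity integrated over all $t$, so there is as yet no ``associated exceptional null set of $t$'s''. The localization in time must come first: test with product fields $X(x,t)=\eta(t)Y(x)$ with $\eta\in C^1_c(\R)$ and $Y\in C^1_c(\R^n;\R^n)$; since $t\mapsto\int_{\R^n}\chi_{E_t}\,\mathrm{div}\,Y\,dx-\int_{(\partial^* E)_t}Y\cdot|\mathbf{p}(\nu_E)|^{-1}\mathbf{p}(\nu_E)\,d\mathcal{H}^{n-1}$ is locally integrable (by your coarea bound) and integrates to zero against every $\eta$, it vanishes for a.e.\ $t$, with the null set now indexed by the spatial field $Y$; only then take a countable family $\{Y_k\}$ dense in $C^1$ norm on a compact exhaustion and union the null sets. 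With these two repairs the proof is complete; the remaining discrepancy is purely one of orientation conventions for $\nu_E$ (the paper uses $\nabla\chi_E=-\nu_E\,|\nabla\chi_E|$), which only affects signs.
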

	\begin{proof}[Proof of Proposition \ref{princi}]
		First of all, we will prove that $ \mu \llcorner_{\partial^* E} $ is a rectifiable Radon measure. It is not difficult to see that $ \mu \ll \mathcal{H}^n $. Indeed, let $ A \subset \R^n \times \R $ be a set with $ \mathcal{H}^n ( A ) = 0 $, and let the set $ D_k := \{ ( x , t ) \in \R^n \times \R^+ \mid \Theta^{*n} ( \mu , ( x , t ) ) \leq k \} $ for each $ k \in \N $. By \cite[Theorem 3.2]{simon1983lectures}, we have
		\[
		\mu ( A \cap D_k ) \leq 2^n\,k\,\mathcal{H}^n( A \cap D_k ) = 0
		\]
		for all $ k \in \N $. Furthermore, by the upper bound of mass density ratio (Lemma \ref{denslemma}), we see that $ \mu ( A \backslash \bigcup_{k=1}^{\infty} D_k ) = 0 $. Thus we obtain $ \mu ( A ) = 0 $, that is, $ \mu \ll \mathcal{H}^n $ holds. Since $ \mu \ll \mathcal{H}^n $, $ | \nabla^{\prime } \chi_E | = \mathcal{H}^n \llcorner_{\partial^* E} $ and Proposition \ref{varphi<<mu}, we see that
		\[
		\mu \llcorner_{\partial^* E} \ll | \nabla^{\prime} \chi_E | , \quad | \nabla^{\prime} \chi_E | \ll \mu \llcorner_{\partial^* E}.
		\]
		By Radon--Nikod\'{y}m theorem, there exists an $ L^1_{ loc } ( | \nabla' \chi_E | ) $ function $ f =( d \mu \llcorner_{ \partial^* E } ) / d | \nabla' \chi_E | $ with $ 0 < f < \infty $ for $ | \nabla' \chi_E | $-a.e.\,and $ \mu \llcorner_{ \partial^* E } = f\,| \nabla' \chi_E | = f\,\mathcal H^n \llcorner_{ \partial^* E }$. This shows that $ \mu \llcorner_{ \partial^* E } $ is a rectifiable Radon measure and the tangent space $ T_{ ( x , t ) }( \mu \llcorner_{ \partial^* E } ) $ with multiplicity $ f $ exists for $ \mathcal H^n $-a.e.\,$ ( x , t ) \in \partial^* E \cap \{ t > 0 \} $. For the next step, we prove that $ T_{ ( x , t ) }\,\mu = T_{ ( x , t ) }\,( \partial^* E ) $ for $ \mathcal{ H }^n $-a.e.\,$ ( x, t ) \in \partial^* E \cap \{ t > 0 \} $. Now, by \cite[Theorem 3.5]{simon1983lectures}, we see that
		\[
		\limsup_{ r \to + 0 } \frac{ \mu ( B^{ n + 1 }_r ( x , t ) ) \setminus \partial^* E ) }{ r^n } = 0 \quad \text{for $ \mathcal{ H }^n $-a.e.\,} ( x , t ) \in \partial^* E \cap \{ t > 0 \}.
		\]
		Let then $ \phi \in C^0_c ( B^{ n + 1 }_1 ( 0 ) ) $ be arbitrary, we have
		\begin{equation*}
			\begin{split}
				\lim_{ r \to + 0 } &\left| \int_{ \R^n \times ( 0 , \infty ) \setminus \partial^* E } \frac{ 1 }{ r^n } \, \phi \left( \frac{ 1 }{ r } ( y - x , s - t ) \right) d \mu ( y , s ) \, \right|\\
				&\leq \| \phi \|_{ C^0 } \limsup_{ r \to + 0 } \frac{ \mu ( B^{ n + 1 }_r ( x , t ) ) \setminus \partial^* E ) }{ r^n } = 0
			\end{split}
		\end{equation*}
		for $ \mathcal{ H }^n $-a.e.\,$ ( x, t ) \in \partial^* E \cap \{ t > 0 \} $. Thus, by $ f \in L^1_{ loc } ( | \nabla' \chi_E | ) $, we obtain at each Lebesgue point of $ f $
		\begin{equation*}
			\begin{split}
				\lim_{ r \to + 0 } \int_{ \R^n \times ( 0 , \infty ) } \frac{ 1 }{ r^n }\, \phi &\left( \frac{ 1 }{ r } ( y - x , s - t ) \right) d \mu ( y , s )\\
				&= \lim_{ r \to + 0 } \int_{ \partial^* E } \frac{ 1 }{ r^n }\,\phi \left( \frac{ 1 }{ r } ( y - x , s - t ) \right)\,\frac{ d \mu }{ d | \nabla^{\prime} \chi_E | } ( y , s )\,d \mathcal{ H }^n ( y , s )\\
				&= f ( x , t ) \int_{ T_{ ( x , t ) } ( \partial^* E ) } \phi ( y , s )\,d \mathcal{ H }^n ( y , s )
			\end{split}
		\end{equation*}
		for all $ \phi \in C^0_c (\R^n\times\R) $ and $ \mathcal{ H }^n $-a.e.\,$ ( x, t ) \in \partial^* E \cap \{ t > 0 \} $. This completes the proof of $ T_{ ( x , t ) }\,\mu = T_{ ( x , t ) }( \partial^* E ) $.
		\vskip.5\baselineskip
		By Proposition \ref{BtoL2}, Proposition \ref{L2cor}, and the above argument, (1) and (2) are proved. Next, we prove (3) and (4). By Lemma \ref{Slice}, we see that the following for a.e.\,$ t > 0 $ and $ \mathcal{H}^{n-1} $-a.e.\,$ x \in (\partial^* E)_t $:
		\begin{align}
			&\mathcal{ H }^{ n - 1 } ( \partial^* E_t \Delta ( \partial^* E )_t ) = 0, \label{Sliceprop1}\\
			&\mathbf{ p } ( \nu_E ( x , t ) ) \neq 0 ,\label{Sliceprop2}\\
			&\nu_{ E_t } ( x ) = \frac{ \mathbf{ p } ( \nu_E ( x , t ) ) }{ | \mathbf{ p } ( \nu_E ( x , t ) ) | }. \label{Sliceprop3}
		\end{align}
		Let $ A := \{ t > 0 \mid \text{ (\ref{Sliceprop1}) fails} \} $ and set $ A_t := \{ x \in (\partial^* E)_t \mid x \notin \partial^* E_t \text{ or (\ref{Sliceprop2})-(\ref{Sliceprop3}) fail} \} $ for every $ t > 0 $, so that $ \mathcal{ L }^1 ( A ) = 0 $ and $ \mathcal{ H }^{ n - 1 } ( A_t ) = 0 $ for every $ t \in ( 0 , \infty ) \setminus A $. Consider then the characteristic function $ \chi ( x , t ) := \chi_{ A_t } ( x ) $ on $ \R^n \times ( 0 , \infty ) $, since $ \mathcal{ L }^1 ( A ) = 0 $ and $ \mathcal{ H }^{ n - 1 } ( A_t ) = 0 $ for every $ t \in ( 0 , \infty ) \setminus A $, we have
		\begin{equation*}
			\begin{split}
				\int_{ \partial^* E } \chi ( x , t )\, | \nabla^{ \partial^* E } ( \mathbf{ q } ( x , t ) ) | \, d \mathcal{ H }^n ( x , t )
				&= \int_0^{ \infty } \int_{ ( \partial^* E )_t} \chi ( x , t ) \, d \mathcal{ H }^{ n - 1 } d t\\
				&= \int_0^{ \infty } \mathcal{ H }^{ n - 1 } ( A_t )\,d t = \int_A \mathcal{ H }^{ n - 1 } ( A_t )\,d t = 0,
			\end{split}
		\end{equation*}
		where we used the co-area formula in the first line, and where $ \nabla^{ \partial^* E } $ is the gradient on the tangent plane of $ \partial^* E $, that is,
		\[
		\nabla^{ \partial^* E } \mathbf{ q } ( x , t ) = P_{ T_{ ( x , t ) } ( \partial^* E ) } ( \nabla \mathbf{ q }  ( x , t ) ).
		\]
		Here, combining (1) and (2), we see that
		\[
		\begin{pmatrix}
			h ( x , t )\\
			1
		\end{pmatrix} \in T_{ ( x , t ) } ( \partial^* E ) \quad \text{at $ \mathcal{ H }^n $-a.e.\,$ ( x , t ) \in \partial^* E \cap \{ t > 0 \} $},
		\]
		which implies $ | \nabla^{ \partial^* E } ( \mathbf{ q } ( x , t ) ) | > 0 $ for $ \mathcal{ H }^n $-a.e.\,$ ( x , t ) \in \partial^* E \cap \{ t > 0 \} $. Hence, it must be $ \chi ( x , t ) = 0 $ for $ \mathcal{ H }^n $-a.e.\,$ ( x , t ) \in \partial^* E \cap \{ t > 0 \} $, thus the first part of (3) and (4) is proved. For the proof of the identity $ T_x\,\mu_t = T_x\,( \partial^* E_t ) $, it is obtained by Lemma \ref{varphi<<mu}, Definition \ref{Brakke} (2) and repeating the argument of (1) at fixed $ t $.
		\vskip.5\baselineskip
		Finally, we prove (5). Taking the $ ( x , t ) \in \partial^* E $ as satisfying (1)-(4) of this Proposition, we can calculate as
		\[
		{}^t ( z , 0 ) \cdot \nu_E ( x , t )
		= z \cdot \mathbf{ p } ( \nu_E ( x , t ) )
		= | \mathbf{ p } ( \nu_E ( x , t ) ) |\,( z \cdot \nu_{ E_t } ( x ) )
		= 0
		\]
		for all $ z \in T_x\,( \partial^* E_t ) $. This completes the proof of (5).
	\end{proof}
	
	\subsection{Boundaries move by mean curvature}
	In this subsection, we prove Theorem \ref{mainresult1} by rephrasing the velocity $ v $ as the mean curvature, and by using geometric measure theory. The argument for this rephrasing corresponds to the proof of the area formula (\ref{theareachange}).
	\vskip.5\baselineskip
	\begin{proof}[Proof of Theorem \ref{mainresult1}]
        Let fix a test function $ \phi \in C^1_c ( \R^n \times ( 0 , \infty ) ) $ arbitrarily. Then, by using Gauss-Green's theorem for set of finite perimeter, we have
		\begin{equation}
		  \int_{ \R^n \times ( 0 , \infty ) } \partial_t \phi\, \chi_E\,d x d t = \int_{ \partial^* E } \phi\,\mathbf{ q } ( \nu_{ E } )\ d\mathcal{ H }^n. \label{Dt1S}
		\end{equation}
		Let $ G $ be the set satisfying Proposition \ref{princi} (1)-(5). Then for all $ ( x , t ) \in G $, we have
		\begin{equation}
			\label{span}
			T_{ ( x , t ) }\, \mu = ( T_x\,( \partial^* E_t ) \times \{ 0 \} ) \oplus \mathrm{ span }
			\begin{pmatrix}
				h ( x , t )\\
				1
			\end{pmatrix} \quad (\text{by Proposition \ref{princi} (2)}),
		\end{equation}
		By $ h ( x , t ) \perp T_x \, \mu_t $, (\ref{span}), Proposition \ref{princi} (1) and (4), we have
		\begin{equation}
			\nu_{ E } ( x , t ) = \frac{ 1 }{ \sqrt{ 1 + | h ( x , t ) |^2 } }
			\begin{pmatrix}
				\nu_{ E_t } ( x )\\
				- h ( x , t ) \cdot \nu_{ E_t } ( x )
			\end{pmatrix}.
                \label{nu-t-component}
		\end{equation}
		By (\ref{nu-t-component}) and $ h ( x , t ) \perp T_x\,\mu_t $ again, for all $ ( x , t ) \in G $, we can calculate the $ i \times ( n + 1 ) $ component of the matrix $ I_{ n + 1 } - \nu_E \otimes \nu_E $ for $ i = 1, \ldots , n 
        + 1 $ as
        \[
		( I_{ n + 1 } - \nu_{ E } \otimes \nu_{ E } )_{ i ,\, ( n + 1 ) } ( x , t ) =
		  \begin{cases}
			\ \frac{ - ( \nu_{ E_t } ( x ) )_i ( h ( x , t ) \cdot \nu_{ E_t } ( x ) ) }{ 1 + | h ( x , t ) |^2 } &( i = 1 , \ldots , n ),\\
			\ \frac{ 1 }{ 1 + | h ( x , t ) |^2 } &( i = n + 1 ),
		\end{cases}
	\]
	where $ I_{ n + 1 } $ is the $ ( n + 1 ) $-identity matrix and $ ( \nu_{ E_t } )_i $ is the $ i $-th component of $ \nu_{ E_t } $
        According to this calculation, $ \nabla {\bf q } = \mathbf{ e }_{ n + 1 } $ and $ T_{ ( x , t ) }\,\mu = T_{ ( x , t ) }\,( \partial^* E ) $ on $ G $, we obtain that the co-area factor of the projection ${\bf q}$ satisfies
		\begin{equation}
			| \nabla^{ \partial^* E } \mathbf{ q } ( \nu_E ( x , t ) ) | = \frac{ 1 }{ \sqrt{ 1 + | h ( x , t ) |^2 } },
			\label{areaelement}
		\end{equation}
		Due to (\ref{Dt1S})-(\ref{areaelement}) and the co-area formula, we compute as
		\begin{equation}
			\begin{split}
				&\int_{ \R^n \times ( 0 , \infty ) } \partial_t \phi\,\chi_E\,dxdt
				= - \int_{ G } \phi\,h \cdot \nu_{ E_t } \,\frac{ 1 }{ \sqrt{ 1 + | h |^2 } }\,d\mathcal{ H }^n
				= - \int_{ \partial^* E } \phi\,h \cdot \nu_{ E_t }\,| \nabla^{ \partial^* E } \mathbf{ q } ( \nu_E ) |\,d\mathcal{ H }^n\\
				&= - \int_{ 0 }^{ \infty } \int_{ \partial^* E \cap \{ \mathbf{ q } = t \} } \phi\,h \cdot \nu_{ E_t }\,d\mathcal{ H }^{ n - 1 } dt
				= - \int_{ 0 }^{ \infty } \int_{ \partial^* E_t } \phi\,h \cdot \nu_{ E_t }\,d\mathcal{ H }^{ n - 1 } dt, \label{GBV}
			\end{split}
		\end{equation}
		where we used $ \mathcal{ H }^n ( \partial^* E \setminus G ) = 0 $.
		By a suitable approximation of $\phi$ in (\ref{GBV}), we deduce
		\begin{equation}
				\int_{ E_{ t_2 } } \phi ( x , t_2 )\,dx - \int_{ E_{ t_1 } } \phi ( x , t_1 )\,dx
				= \int_{ t_1 }^{ t_2 } \int_{ E_t } \partial_t \phi\,dx dt + \int_{ t_1 }^{ t_2 } \int_{ \partial^* E_t } \phi\,h \cdot \nu_{ E_t }\,d\mathcal{ H }^{ n - 1 } dt
		\end{equation}
		for a.e.~$ 0 < t_1 < t_2 < \infty $. Using the continuity of $ \| \chi_{ E_t } \|_{ L^1 ( \R^n ) } $ again, we obtain the above equality for all $ 0 \leq t_1 < t_2 < \infty $ and all $ \phi \in C^1_c ( \R^n \times \R^+ ) $. This completes the proof.
	\end{proof}
	
	\begin{remark}
		In \cite{takasao2023existence} and \cite{takasao2020existence}, the existence theorem for volume-preserving MCFs and MCFs with transport and forcing term was proved in the $ L^2 $ flow sense, not in the Brakke's sense. The argument of this paper to prove (\ref{theareachange}) can also be applied to \cite{takasao2023existence} and \cite{takasao2020existence}, since the property of Brakke flow is only used for the weak solution of the MCF to become an $ L^2 $ flow.
	\end{remark}
	
		\subsection*{Acknowledgment}
	The author would like to thank his supervisor Yoshihiro Tonegawa for his insightful feedback, careful reading, and improving the quality of this paper. The author was supported by JST, the establishment of university fellowships towards the creation of science technology innovation, Grant Number JPMJFS2112.

	\appendix
	\section{Measure-Function Pairs}
	Here, we recall the notion of measure-function pairs introduced by Hutchinson in \cite{hutchinson1986second}.
	\begin{definition}
		Let $ E \subset \R^n $ be an open set and let $ \mu $ be a Radon measure on $ E $. Suppose $ f \in L^1 ( \mu ; \R^d ) $. Then we say that $ ( \mu , f ) $ is $ \R^d $-valued measure-function pair over $ E $.
	\end{definition}
	Next, we define the notion of convergence for a sequence of $ \R^d $-value measure-function pairs over $ E $.
	\begin{definition}
		Let $ \{ ( \mu_i , f_i ) \}_{ i = 1 }^{ \infty } $ and $ ( \mu , f ) $ be $ \R^d $-valued measure-function pairs over $ E $. Suppose
		\[
		\mu_i \rightharpoonup \mu
		\] 
		as Radon measure on $ E $. Then we call $ ( \mu_i , f_i ) $ converges to $ ( \mu , f ) $ in the weak sense if
		\[
		\int_E f_i \cdot \phi\,d \mu_i \to \int_E f \cdot \phi\,d \mu
		\]
		for all $ \phi \in C^0_c ( E ; \R^d ) $.
	\end{definition}
	We present a less general version of \cite[Theorem 4.4.2]{hutchinson1986second} to the extent that it can be used in this paper.
	\begin{theorem}
		\label{measurefunctiontheorem}
		Suppose that $ \R^d $-valued measure-function pairs $ \{ ( \mu_i , f_i ) \}_{ i = 1 }^{ \infty } $ are satisfied
		\[
		\sup_i \int_E | f_i |^2\,d \mu_i < \infty.
		\]
		Then the following hold:
		\begin{description}
			\item[\textup{(1)}] There exist a subsequence $ \{ ( \mu_{ i_j } , f_{ i_j } ) \}_{ j = 1 }^{ \infty } $ and $ \R^d $-value measure-function pair $ ( \mu , f ) $ such that $ ( \mu_{ i_j } , f_{ i_j } ) $ converges to $ ( \mu , f ) $ as measure-function pair.
			\item[\textup{(2)}] If $ ( \mu_{ i_j } , f_{ i_j } ) $ converges to $ ( \mu , f ) $ then
			\[
			\int_E | f |^2\,d \mu \leq \liminf_{ j \to \infty } \int_E | f_{ i_j } |^2\,d \mu_{ i_j } < \infty.
			\]
		\end{description}
	\end{theorem}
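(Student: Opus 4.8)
The plan is to reduce both parts to the weak-* compactness of vector-valued Radon measures together with the Riesz representation of the limit in $L^2(\mu)$. Since the notion of convergence of pairs presupposes $\mu_i \rightharpoonup \mu$, I first pass to a subsequence along which the base measures converge weak-* to a Radon measure $\mu$ on $E$ (as in Hutchinson's setting, the $\mu_i$ are uniformly bounded on compact subsets of $E$); all subsequent extractions are taken within this one. The central object is then the sequence of $\R^d$-valued Radon measures $\lambda_i := f_i\,\mu_i$, acting by $\phi \mapsto \int_E \phi \cdot f_i\,d\mu_i$ for $\phi \in C^0_c(E;\R^d)$. Writing $C := \sup_i \int_E |f_i|^2\,d\mu_i$, the Cauchy--Schwarz inequality gives
\[
\left| \int_E \phi \cdot f_i\,d\mu_i \right| \leq \left( \int_E |f_i|^2\,d\mu_i \right)^{1/2} \left( \int_E |\phi|^2\,d\mu_i \right)^{1/2} \leq C^{1/2}\left( \int_E |\phi|^2\,d\mu_i \right)^{1/2}.
\]
Because $|\phi|^2 \in C^0_c(E)$, the weak-* convergence $\mu_i \rightharpoonup \mu$ makes the right-hand side bounded in $i$ for each fixed $\phi$, so the total variations $|\lambda_i|$ are locally uniformly bounded; weak-* compactness of Radon measures then yields a further subsequence with $\lambda_{i_j} \rightharpoonup \lambda$ for some $\R^d$-valued Radon measure $\lambda$.

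The next step is to identify $\lambda$ with an $L^2$-density against $\mu$. Passing to the limit in the displayed bound and using $\int_E |\phi|^2\,d\mu_{i_j} \to \int_E |\phi|^2\,d\mu$ gives $|\lambda(\phi)| \leq C^{1/2}\big(\int_E |\phi|^2\,d\mu\big)^{1/2}$ for all $\phi \in C^0_c(E;\R^d)$. Thus $\phi \mapsto \lambda(\phi)$ is bounded with respect to the $L^2(\mu;\R^d)$ norm; since $C^0_c(E;\R^d)$ is dense in $L^2(\mu;\R^d)$, it extends uniquely to such a functional, and the Riesz representation theorem provides $f \in L^2(\mu;\R^d)$ with $\|f\|_{L^2(\mu)} \leq C^{1/2}$ and $\lambda(\phi) = \int_E \phi \cdot f\,d\mu$. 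Consequently $\int_E \phi \cdot f_{i_j}\,d\mu_{i_j} = \lambda_{i_j}(\phi) \to \lambda(\phi) = \int_E \phi \cdot f\,d\mu$ for every test $\phi$, which together with $\mu_{i_j} \rightharpoonup \mu$ is exactly the convergence of $(\mu_{i_j}, f_{i_j})$ to $(\mu, f)$; since $f \in L^2(\mu) \subset L^1_{loc}(\mu)$, the pair $(\mu, f)$ is legitimate, proving (1).

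For (2), I exploit the same Cauchy--Schwarz estimate in duality form. Set $L := \liminf_j \int_E |f_{i_j}|^2\,d\mu_{i_j}$, which is finite as $L \leq C$. Fixing $\phi \in C^0_c(E;\R^d)$ and passing to a subsequence realizing the liminf as an actual limit, the convergence from (1) and $\mu_{i_j}\rightharpoonup\mu$ give
\[
\int_E \phi \cdot f\,d\mu = \lim_{j\to\infty} \int_E \phi \cdot f_{i_j}\,d\mu_{i_j} \leq \left( \int_E |\phi|^2\,d\mu \right)^{1/2} L^{1/2}.
\]
Taking the supremum over $\phi \in C^0_c(E;\R^d)$ with $\|\phi\|_{L^2(\mu)} \leq 1$, and recalling that this duality supremum equals $\|f\|_{L^2(\mu)}$ (again by density of $C^0_c$ in $L^2(\mu)$), yields $\int_E |f|^2\,d\mu \leq L < \infty$, which is the asserted lower semicontinuity.

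I expect the delicate point to be the identification of the abstract weak-* limit $\lambda$ with a genuine $L^2(\mu)$-density: the Cauchy--Schwarz bound must be carried intact through the limit to produce the estimate $|\lambda(\phi)| \leq C^{1/2}\|\phi\|_{L^2(\mu)}$, which is precisely what upgrades $\lambda$ to an $L^2$ function via Riesz. Two secondary points of care are ensuring $\mu_{i_j} \rightharpoonup \mu$ \emph{before} invoking convergence of pairs (the definition presupposes it), and, in (2), correctly combining the $\liminf$ in one index with the weak-* limit of the $\mu_{i_j}$ — handled by passing to a $\liminf$-realizing subsequence before applying Cauchy--Schwarz.
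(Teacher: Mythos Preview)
The paper does not prove this theorem; it merely records it as a less general version of \cite[Theorem 4.4.2]{hutchinson1986second} and uses it as a black box. Your argument is correct and is essentially the standard proof one finds in Hutchinson: extract a weak-$*$ limit of the vector measures $f_i\,\mu_i$, carry the Cauchy--Schwarz bound through the limit, and identify the limiting measure with an $L^2(\mu)$-density via Riesz representation; the lower semicontinuity in (2) then follows from the same duality. Two small remarks: you are right to flag that the statement, as written in the paper, omits the hypothesis of local uniform mass bounds on the $\mu_i$ (without which one cannot even extract $\mu$), and your parenthetical handling of this is appropriate; also, the paper's Definition of measure-function pair asks for $f\in L^1(\mu)$ rather than $L^1_{loc}(\mu)$, so strictly speaking your conclusion $f\in L^2(\mu)\subset L^1_{loc}(\mu)$ does not literally match---but this is a harmless imprecision in the paper's definition rather than a defect in your argument.
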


	\bibliography{myref.bib}
	\bibliographystyle{abbrv}
	
	\hrulefill
	
\end{document}